\newtheorem{proposition}{Proposition}[section]
\newtheorem{lemma}[proposition]{Lemma}
\newtheorem{corollary}[proposition]{Corollary}
\newtheorem{theorem}[proposition]{Theorem}
\theoremstyle{definition}
\newtheorem{definition}[proposition]{Definition}
\newtheorem{example}[proposition]{Example}
\newtheorem{examples}[proposition]{Examples}
\newtheorem{remark}[proposition]{Remark}
\newtheorem{remarks}[proposition]{Remarks}
\newcommand{\thlabel}[1]{\label{th:#1}}
\newcommand{\thref}[1]{Theorem~\ref{th:#1}}
\newcommand{\selabel}[1]{\label{se:#1}}
\newcommand{\seref}[1]{Section~\ref{se:#1}}
\newcommand{\lelabel}[1]{\label{le:#1}}
\newcommand{\leref}[1]{Lemma~\ref{le:#1}}
\newcommand{\prlabel}[1]{\label{pr:#1}}
\newcommand{\prref}[1]{Proposition~\ref{pr:#1}}
\newcommand{\colabel}[1]{\label{co:#1}}
\newcommand{\coref}[1]{Corollary~\ref{co:#1}}
\newcommand{\relabel}[1]{\label{re:#1}}
\newcommand{\exlabel}[1]{\label{ex:#1}}
\newcommand{\exref}[1]{Example~\ref{ex:#1}}
\newcommand{\delabel}[1]{\label{de:#1}}
\newcommand{\deref}[1]{Definition~\ref{de:#1}}
\newcommand{\eqlabel}[1]{\label{eq:#1}}
\newcommand{\equref}[1]{(\ref{eq:#1})}
\newcommand{\Cc}{\mathcal{C}}
\newcommand{\Mm}{\mathcal{M}}
\def\*C{{}^*\hspace*{-1pt}{\Cc}}
\def\text#1{{\rm {\rm #1}}}
\begin{document}
\title[The global extension problem for Poisson algebras]
{The global extension problem, crossed products and co-flag
non-commutative Poisson algebras}



\author{A. L. Agore}
\address{Faculty of Engineering, Vrije Universiteit Brussel, Pleinlaan 2, B-1050 Brussels, Belgium}
\address{Permanent address: Department of Applied
Mathematics, Bucharest University of Economic Studies, Piata
Romana 6, RO-010374 Bucharest 1, Romania}
\email{ana.agore@vub.ac.be and ana.agore@gmail.com}

\author{G. Militaru}
\address{Faculty of Mathematics and Computer Science, University of Bucharest, Str.
Academiei 14, RO-010014 Bucharest 1, Romania}
\email{gigel.militaru@fmi.unibuc.ro and gigel.militaru@gmail.com}

\thanks{A.L. Agore is Postdoctoral Fellow of the Fund for Scientific Research Flanders (Belgium) (F.W.O.– Vlaanderen). This work
was supported by a grant of the Romanian National Authority for
Scientific Research, CNCS-UEFISCDI, grant no. 88/05.10.2011.}

\subjclass[2010]{17B63, 17B05, 16E40} \keywords{The extension
problem for Poisson algebras; Crossed products; Classification
results.}


\maketitle

\begin{abstract}
Let $P$ be a Poisson algebra, $E$ a vector space and $\pi : E \to
P$ an epimorphism of vector spaces with $V = {\rm Ker} (\pi)$. The
global extension problem asks for the classification of all
Poisson algebra structures that can be defined on $E$ such that
$\pi : E \to P$ becomes a morphism of Poisson algebras. From a
geometrical point of view it means to decompose this groupoid into
connected components and to indicate a point in each such
component. All such Poisson algebra structures on $E$ are
classified by an explicitly constructed classifying set ${\mathcal
G} {\mathcal P} {\mathcal H}^{2} \, (P, \, V)$ which is the
coproduct of all non-abelian cohomological objects ${\mathcal P}
{\mathcal H}^{2} \, (P, \, (V, \cdot_V, [-,-]_V))$ which are the
classifying sets for all extensions of $P$ by $(V, \cdot_V,
[-,-]_V)$. The second classical Poisson cohomology group $H^2 (P,
V)$ appears as the most elementary piece among all components of
${\mathcal G} {\mathcal P} {\mathcal H}^{2} \, (P, \, V)$. Several
examples are provided in the case of metabelian Poisson algebras
or co-flag Poisson algebras over $P$: the latter being Poisson
algebras $Q$ which admit a finite chain of epimorphisms of Poisson
algebras $P_n : = Q \stackrel{\pi_{n}}{\longrightarrow} P_{n-1} \,
\cdots \, P_1 \stackrel{\pi_{1}} {\longrightarrow} P_{0} := P$
such that ${\rm dim} ( {\rm Ker} (\pi_{i}) ) = 1$, for all $i = 1,
\cdots, n$.
\end{abstract}

\section*{Introduction}
A Poisson algebra is both a Lie algebra and an associative algebra
living on the same vector space $P$ such that any bracket $[-, \,
p] : P \to P$ is a derivation of the associative algebra $P$. The
concept is the abstract algebra counterpart of a Poisson manifold:
for a given smooth manifold $M$, there is a one-to-one
correspondence between Poisson brackets on the commutative algebra
$C^{\infty} (M)$ of smooth functions on $M$ and all Poisson
structures on $M$ (see for instance \cite[Remark 1.2]{gra1995}) --
we recall that Poisson structures on $M$ are bivector fields $Q$
such that $[Q, \, Q]$ = 0, where $[-, \, -]$ is the Schouten
bracket of multivector fields. In categorical language the
correspondence $M \mapsto C^{\infty} (M)$ gives a contravariant
functor from the category of Poisson manifolds to the category of
Poisson algebras. The functor $C^{\infty} (-)$ is the tool used
for translating purely geometrical concepts or problems of study
into the algebraic setting of Poisson algebras using a well
established dictionary between the two categories: for details we
refer to \cite{gra2013, LPV} and the references therein. The first
example of a Poisson algebra structure was given by S. D. Poisson
in 1809 related to the study of the three-body problem in
celestial mechanics. Since then, Poisson algebras have become a
very active subject of research in several areas of mathematics
and mathematical physics such as: Hamiltonian mechanics
\cite{arnold, Lie, dirac}, differential geometry \cite{LPV}, Lie
groups and representation theory, noncommutative
algebraic/diferential geometry \cite{bergh}, (super)integrable
systems \cite{marq}, quantum field theory, vertex operator
algebras, quantum groups \cite{chari} and so on. The theory of
Poisson algebras experienced a new upsurge in the last twenty
years in connection to the development of noncommutative geometry.
One of the problems in this context is the right definition of
what should be a \emph{non-commutative} Poisson algebra. From a
purely algebraic point of view a non-commutative Poisson algebra
means a Poisson algebra whose underlying algebra structure is
non-commutative \cite{kobo, kobo2001}, etc. This view point is
also adopted in this work: more precisely, throughout this paper
we do not assume the underlying associative algebra $P$ of a
Poisson algebra to be commutative.

Beyond the remarkable applications in the above mentioned fields,
Poisson algebras are objects of study in their own right, from a
purely algebraic viewpoint \cite{calderon, calderon2, caressa,
casas, Farkas, gozeremm, jordan, kobo, Lu, Lim, umirbaev} etc. In
this spirit a tempting question arises: \emph{for a given positive
integer $n$, classify up to an isomorphism all Poisson algebras of
dimension $n$ over a field $k$.} Having in mind the duality
established by the functor $C^{\infty} (-)$, the question can be
viewed as the algebraic version of its geometric counterpart
initiated in \cite{gra1993} where the first steps towards the
classification of low dimensional Poisson structures is given
using differential geometry tools. The geometrical approach is
also exposed in \cite[Chapter 9]{LPV} where the classification of
Poisson structures of two or three-dimensional manifolds over a
field of characteristic zero is given. The classification of
non-commutative Poisson algebras of a given dimension is a problem
far from being trivial as it contains as subsequent problems the
classification of finite dimensional Lie algebras - known up to
dimension $7$ \cite{popovici} - as well as the classification of
finite dimensional associative algebras - known up to dimension
$5$ \cite{mazzola}. In both cases the classification holds over an
algebraically closed field of characteristic $\neq 2$. The first
steps for the algebraic approach of the classification of Poisson
algebras were taken in \cite[Section 2]{gozeremm}, while
\cite{kobo2001} classifies the finite-dimensional non-commutative
Poisson algebras with Jacobson radical of zero square. The
classification of \emph{finite objects}, such as groups of a given
order or associative (resp. Lie, Poisson, Hopf, etc.) algebras of
a given dimension, relies on the famous \emph{extension problem}
initiated at the level of groups by H\"{o}lder and studied later
on for Lie algebras \cite{CE}, associative algebras \cite{Hoch2},
quantum groups \cite{AD}, Poisson algebras \cite{caressa}, etc.
For the relevance of the extension problem in differential
geometry we refer to \cite{AMR2, le}. A generalization of the
extension problem, called the \emph{global extension problem}, was
introduced in \cite{Mi2013} at the level of Leibniz algebras, as
the categorical dual of what we have called the \emph{extending
structures problem} \cite{am-2011}. The aim of this paper is the
study of the global extension (GE) problem for Poisson algebras
which consists of the following question:

\textbf{The global extension problem.} \emph{Let $P$ be a Poisson
algebra, $E$ a vector space and $\pi : E \to P$ an epimorphism of
vector spaces. Describe and classify all Poisson algebra
structures $(\cdot, [- , - ]) $ that can be defined on $E$ such
that $\pi : E \to P$ is a morphism of Poisson algebras.}

We start by explaining the meaning of the word 'global'. We recall
the classical extension problem formulated in the setting of
Poisson algebras in \cite{caressa}. Let $P$ and $Q$ be two fixed
Poisson algebras. The extension problem asks for the
classification of all Poisson algebras $\mathfrak{E}$ which are
extensions of $P$ by $Q$, i.e. all Poisson algebras $\mathfrak{E}$
that fit into an exact sequence of Poisson algebras:
\begin{eqnarray} \eqlabel{extencros0}
\xymatrix{ 0 \ar[r] & Q \ar[r]^{i} & \mathfrak{E} \ar[r]^{p} & P
\ar[r] & 0 }.
\end{eqnarray}
The classification is up to an isomorphism of Poisson algebras
that stabilizes $Q$ and co-stabilizes $P$. To the best of our
knowledge, the problem was studied so far only for commutative
Poisson algebras and moreover $Q$ was considered to be the abelian
Poisson algebra (i.e. the multiplication and the bracket on $Q$
are both the trivial maps). In this case the isomorphism classes
of all extensions of $P$ by $Q$ are parameterized by the second
Poisson cohomology group $H^2 (P, Q)$ \cite[Proposition
6.1]{caressa}. Now, if $(E, \cdot_E, [-, -]_E)$ is a Poisson
algebra structure on $E$ such that $\pi: (E, \cdot_E, [-, -]_E)
\to P$ is a morphism of Poisson algebras, then $(E, \cdot_E, [-,
-]_E)$ is an extension of $P$ by $V: = {\rm Ker} (\pi)$, which is
a Poisson subalgebra of $(E, \cdot_E, [-, -]_E)$. However, this
Poisson algebra structure of $V$ is not fixed from the input data
as in the case of the extension problem: it depends essentially on
the Poisson algebra structures on $E$ which we are looking for.
Thus, we can conclude that the classical extension problem is the
\emph{'local'} version of the GE-problem: namely, the case when
the Poisson algebra structures on ${\rm Ker} (\pi)$ are fixed.
Secondly, we will explain what we mean by classification in the
GE-problem and its geometrical interpretation. We denote by
${\mathcal C}_{\pi} \, (E, \, P)$ the small category whose objects
are all Poisson algebra structures $(\cdot_E, [- , - ]_E)$ on $E$
such that $\pi: (E, \cdot_E, [- , - ]_E) \to P$ is a Poisson
algebra map. A morphism $\varphi: (\cdot_E, [- , - ]_E) \to
(\cdot'_E, [- , - ]'_E)$ in the category ${\mathcal C}_{\pi} \,
(E, \, P)$ is a Poisson algebra map $\varphi: (E, \cdot_E, \{-, \,
-\}_E) \to (E, \cdot'_E, \{-, \, -\}^{'}_E)$ which stabilizes $V$
and co-stabilizes $P$, i.e. the following diagram
\begin{eqnarray} \eqlabel{diagramaintro}
\xymatrix {& V \ar[r]^{i} \ar[d]_{Id} & {E}
\ar[r]^{\pi} \ar[d]^{\varphi} & P \ar[d]^{Id}\\
& V \ar[r]^{i} & {E}\ar[r]^{\pi } & P}
\end{eqnarray}
is commutative. In this case we say that the Poisson algebra
structures $(\cdot_E, [- , - ]_E)$ and $(\cdot'_E, [- , - ]'_E)$
on $E$ are \emph{cohomologous} and we denote this by $(E, \cdot_E,
\{-, \, -\}_E) \approx (E, \cdot'_E, \{-, \, -\}^{'}_E)$. The
category ${\mathcal C}_{\pi} \, (E, \, P)$ is a groupoid, i.e. any
morphism is an isomorphism. In particular, we obtain that
$\approx$ is an equivalence relation on the set of objects of
${\mathcal C}_{\pi} \, (E, \, P)$ and we denote by ${\rm Gext} \,
(P, \, E)$ the set of all equivalence classes via $\approx$, i.e.
${\rm Gext} \, (P, \, E) := {\mathcal C}_{\pi} (E, \, P)/\approx$.
The answer to the GE-problem will be provided by explicitly
computing ${\rm Gext} \, (P, \, E)$ for a given Poisson algebra
$P$ and a vector space $E$. From geometrical view point this means
to give the decomposition of the groupoid ${\mathcal C}_{\pi} \,
(E, \, P)$ into connected components and to indicate a 'point' in
each such component. This will be the main result of the paper: we
shall prove that ${\rm Gext} (P, \, E)$ is parameterized by a
global cohomological object denoted by ${\mathcal G} {\mathcal P}
{\mathcal H}^{2} \, (P, \, V)$, where $V = {\rm Ker}(\pi)$. The
explicit bijection between ${\mathcal G} {\mathcal P} {\mathcal
H}^{2} \, (P, \, V)$ and ${\rm Gext} (P, \, E)$ is also indicated.
Moreover, we shall prove that ${\mathcal G} {\mathcal P} {\mathcal
H}^{2} \, (P, \, V)$ is the coproduct in the category of sets of
all classifying objects of all 'local' extension problems, which
are the \emph{non-abelian} cohomological objects denoted by
${\mathcal P} {\mathcal H}^{2} \, (P, \, (V, \cdot_V, [-, -]_V ))$
-- for any Poisson algebra structure $(\cdot_V, [-, -]_V )$ on
$V$. The second classical Poisson cohomology group $H^2 (P, V)$
\cite{caressa, Flato} appears as the most elementary piece among
all components of ${\mathcal G} {\mathcal P} {\mathcal H}^{2} \,
(P, \, V)$. The results are proved in the general case, leaving
aside the 'abelian' case which is the traditional setting for the
extension problem for groups or Lie (resp. associative, Poisson)
algebras.

The paper is organized as follows: in \seref{prel} we recall the
basic concepts that will be used throughout the article following
the terminology of \cite{am-2013, am-2013c}. \seref{unifiedprod}
contains the main results of the paper: the crossed product of
Poisson algebras is introduced as the key tool of our approach.
The crossed product, denoted by $P \, \sharp \, V$ is associated
to two Poisson algebras $P$ and $V$ connected by three actions
$(\rightharpoonup, \, \triangleleft, \, \triangleright)$ and two
non-abelian cocycles $\vartheta : P \times P \to V$ and $f: P
\times P \to V$. The datum $(\rightharpoonup, \, \triangleleft, \,
\triangleright, \, \vartheta, \, f)$ must satisfy several axioms
in order to obtain a crossed product of Poisson algebras as given
in \thref{1}. Let $\pi : E \to P$ be an epimorphism of vector
spaces between a vector space $E$ and a Poisson algebra $P$ with
$V = {\rm Ker} (\pi)$. We prove that any Poisson algebra structure
$(\cdot_E, [-, \, -]_E)$ on $E$ such that $\pi : (E, \cdot_E, [-,
\, -]_E) \to P$ is a morphism of Poisson algebras is cohomologous
to a crossed product $P \, \sharp \, V$. Based on this, the
theoretical answer to the GE-problem is given in \thref{main1222}:
the classifying set ${\rm Gext} (P, \, E)$ is parameterized by an
explicitly constructed global cohomological object ${\mathcal G}
{\mathcal P} {\mathcal H}^{2} \, (P, \, V)$ and the bijection
between the elements of ${\mathcal G} {\mathcal P} {\mathcal
H}^{2} \, (P, \, V)$ and ${\rm Gext} (P, \, E)$ is given. The
relation 'global' vs 'local' for the extension problem is given in
\coref{formulamare}. For a fixed Poisson algebra $V = (V, \cdot_V,
[-, \, -]_V)$ the classifying object ${\mathcal P} {\mathcal
H}^{2} \, (P, \, (V, \cdot_V, [-, \, -]_V))$ of all extensions of
$P$ by $V$ is constructed as an answer to the classical extension
problem for Poisson algebras leaving aside the abelian case. Then
we show that ${\mathcal G} {\mathcal P} {\mathcal H}^{2} \, (P, \,
V)$ is the coproduct in the category of sets of all local
non-abelian cohomological groups ${\mathcal P} {\mathcal H}^{2} \,
(P, \, (V, \cdot_V, [-, \, -]_V))$, the coproduct being made over
all possible Poisson algebra structures $(\cdot_V, \{-, -\}_V )$
on $V$. The abelian case, corresponding to the trivial Poisson
structure on $V$ is derived as a special case. The concept of
metabelian Poisson algebra is introduced and their structure is
explicitly described. In \seref{coflag} we shall identify a way of
computing ${\mathcal G} {\mathcal P} {\mathcal H}^{2} \, (P, \,
V)$ for what we have called co-flag Poisson algebras over $P$ as
defined in \deref{coflg}. All co-flag Poisson structures over $P$
can be completely described by a recursive reasoning, the key step
being treated in \thref{clsfP1}. For instance, \exref{calexpext}
describes and classifies the $4$-dimensional co-flag algebras over
the Heisenberg Poisson algebra. At the end of the paper, as an
application of our constructions, we take the first steps toward
the classification of co-flag Poisson algebras.
\coref{dim2Poissoncoflag} provides the description and
classification of all $2$-dimensional co-flag Poisson algebras.
Using the recursive method introduced in \seref{coflag} we can
then describe all $3$-dimensional co-flag Poisson algebras: a
relevant example is given in \coref{dim2Poissoncoflag}.

\section{Preliminaries}\selabel{prel}
For two sets $X$ and $Y$ we shall denote by $X \sqcup Y$ their
coproduct in the category of sets, i.e. $X \sqcup Y$ is the
disjoint union of $X$ and $Y$.  All vector spaces, Lie or
associative algebras, linear or bilinear maps are over an
arbitrary field $k$. A map $f: V \to W$ between two vector spaces
is called the \emph{trivial} map if $f (v) = 0$, for all $v\in V$.
Let $\pi : E \to P$ be a linear map between two vector spaces and
$V: = {\rm Ker} (\pi)$. We say that a linear map $\varphi: E \to
E$ \emph{stabilizes} $V$ (resp. \emph{co-stabilizes} $P$) if the
left square (resp. the right square) of diagram
\equref{diagramaintro} is commutative. Throughout this paper, by
an algebra $P = (P, \, m_P)$ we will always mean an associative,
not necessarily commutative or unital, algebra over $k$. The
bilinear multiplication $m_P: P \times P \to P$ will be denoted by
$m_P (p, \, q) = pq$, for all $p$, $q\in P$. When $P = (P, \,
m_P)$ is unital the unit will be denoted by $1_P$. The following
convention will be used: when defining the Poisson algebra
structure we only write down the non-zero values of the
multiplication and the bracket. For an algebra $P$ we shall denote
by ${}_P\Mm_P$ the category of all $P$-bimodules, i.e. triples
$(V, \, \rightharpoonup, \, \triangleleft)$ consisting of a vector
space $V$ and two bilinear maps $\rightharpoonup \, : P \times V
\to V$, $\triangleleft : V \times P \to V$ such that $(V,
\rightharpoonup)$ is a left $P$-module, $(V, \triangleleft)$ is a
right $P$-module and $p \rightharpoonup (x \triangleleft q) = (p
\rightharpoonup x) \triangleleft q$, for all $p$, $q\in P$ and
$x\in V$. Representations of a Lie algebra $P = (P, \, [-, \, -])$
will be viewed as left modules over $P$. Explicitly, a left Lie
$P$-module is a vector space $V$ together with a bilinear map $
\triangleright : P \times V \to V$ such that for any $p$, $q \in
P$ and $x\in V$ we have:
\begin{equation}\eqlabel{modulstring}
[p, \, q] \triangleright x = p \triangleright (q \triangleright x)
- q \triangleright (p \triangleright x).
\end{equation}
The category of left Lie $P$-modules will be denoted by ${}^P\Mm$.
A \emph{Poisson algebra} is a triple $P = (P, \, m_P, \, [-, \,
-])$, where $(P, m_P)$ is an associative algebra, $(P, \, [-, \,
-])$ is a Lie algebra such that any Hamiltonian $[ -, \, r ] : P
\to P$ is a derivation of the associative algebra $(P, m_P)$, i.e.
the Leibniz law:
\begin{equation}\eqlabel{p1}
[pq, \, r ] = [p, \, r] \, q + p \, [q, \, r]
\end{equation}
holds for any $p$, $q$, $r\in P$. Usually, a Poisson algebra $P$
is by definition assumed to be commutative, as the main examples
are in classical differential geometry. However, throughout this
paper we do not impose this restriction. A morphism between two
Poisson algebras $P$ and $P'$ is a linear map $\varphi: P \to P'$
that is a morphism of associative algebras as well as of Lie
algebras.

\begin{remarks}\relabel{unitaPos}
$(1)$ Let $P$ be a unitary Poisson algebra. By applying
\equref{p1} for $p = q = 1_P$ we obtain:
\begin{equation}\eqlabel{p2}
[1_P, \, r ] = 0 = [ r, \, 1_P]
\end{equation}
for all $r \in P$. Any non unital Poisson algebra embeds into a
unital Poisson algebra by adding a unit $1_P$ to the associative
algebra $(P, m_P)$ and considering \equref{p2} as the defining
relation for the bracket evaluated in the unit $1_P$.

$(2)$ Let $P = (P, \, m_P)$ be an algebra and $u\in k$. Then $(P,
m_P, [-, \, -]_u)$ is a Poisson algebra, where $[a, \, b]_u := u
(ab - ba)$, for all $a$, $b\in P$ \cite{kobo}. In particular, any
associative algebra $P = (P, \, m_P)$ is a Poisson algebra with
the abelian Lie algebra structure, i.e. $[a, \, b] := 0$, for all
$a$, $b\in P$. Any Lie algebra $P = (P, \, [-, -])$ is a Poisson
algebra with the trivial algebra structure, i.e. $p q := 0$, for
all $p$, $q\in P$. Any vector space $V$ has a Poisson algebra
structure with the trivial algebra structure and the abelian Lie
algebra structure. This trivial Poisson algebra structure on $V$
will be called \emph{abelian} and will be denoted from now on by
$V_0$.

$(3)$ If $P$ is a Poisson algebra that is non-commutative and
\emph{prime} as an associative algebra, then \cite[Theorem
2.1]{Farkas} shows that the bracket on $P$ has the form $[p, q] =
\lambda (pq - qp)$, for some $\lambda \in {\mathcal Z}^{+} (P)$,
where ${\mathcal Z}^{+} (P)$ is the center of the ring of
quotients of $P$. In particular, any Poisson algebra structure on
the Weyl algebra $A_1$ has the Lie bracket of the form $[p, q] =
\lambda (pq - qp)$, for some $\lambda \in k$. For further results
on the structure of a bracket on a non-commutative Poisson algebra
we refer to \cite{kobo}.
\end{remarks}

\begin{examples} \exlabel{exemplemici}
$(1)$ Up to an isomorphism, there exist two $1$-dimensional
Poisson algebras: the first one is $k_0$, i.e. $k$ viewed with the
abelian Poisson algebra structure and the second one, denoted by
$k_1$, is the vector space $k$, with the unital algebra structure
($1 \cdot 1 := 1$) and the trivial bracket.

$(2)$ Any unital $2$-dimensional Poisson algebra has the trivial
bracket, thanks to the compatibility \equref{p2}. Therefore, their
classification follows from \cite[Corollary 4.5]{am-2013c} where
all $2$-dimensional associative, unital algebras over an arbitrary
field are classified. We point out that if $k$ has characteristic
$2$ then the number of types of $2$-dimensional Poisson algebras
can be infinite.
\end{examples}

Poisson bimodules will play a key role in constructing the
cohomolgical object ${\mathcal P} {\mathcal H}^{2} \, (P, \,
V_0)$. A \emph{Poisson bimodule} \cite{Flato, kobo2001} over a
Poisson algebra $P$ is a system $(V, \, \rightharpoonup, \,
\triangleleft, \, \triangleright)$ consisting of a vector space
$V$ and three bilinear maps such that $(V, \, \rightharpoonup, \,
\triangleleft) \in {}_P\Mm_P$ is a $P$-bimodule, $(V, \,
\triangleright) \in {}^P\Mm$ is a left Lie $P$-module satisfying
the following compatibility conditions for any $p$, $q\in P$ and
$x\in V$:
\begin{eqnarray}
(pq) \triangleright x &=& p \rightharpoonup (q \triangleright x) +
(p \triangleright x) \triangleleft q \eqlabel{bimod1}\\
\left[p, \, q \right] \rightharpoonup x &=& p \rightharpoonup (q
\triangleright x) - q \triangleright (p \rightharpoonup x) \eqlabel{bimod2}\\
x \triangleleft \left[p, \, q \right] &=& (q \triangleright x)
\triangleleft p - q \triangleright (x \triangleleft p).
\eqlabel{bimod3}
\end{eqnarray}
We denote by ${}_P^P\Mm_P$ the category of Poisson bimodules over
$P$ having as morphisms all linear maps which are compatible with
the actions. Any vector space $V$ has a trivial Poisson bimodule
structure over $P$, where $(\rightharpoonup, \, \triangleleft, \,
\triangleright)$ are all the trivial maps. Using the Leibniz law
\equref{p1}, we can easily see that $P := (P, \, \rightharpoonup
\, = \,  \triangleleft \, := \, m_P, \,\, \triangleright := \, [-,
-])$ is a Poisson bimodule over $P$.

\subsection*{The Hochschild product for associative algebras.}
We recall the construction of the Hochschild product for
associative algebras following the terminology used in
\cite{am-2013c}.

\begin{definition} \delabel{hocdat}
Let $P$ be an algebra and $V$ a vector space. A \emph{Hochschild
(or pre-crossed) datum} of $P$ by $V$ is a system $\Theta(P, V) =
(\rightharpoonup, \, \triangleleft, \, \vartheta, \, \cdot)$
consisting of four bilinear maps
$$
\rightharpoonup \, \, : P \times V \to V, \quad \triangleleft : V
\times P \to V, \quad \vartheta \,\, : P \times P \to V, \quad
\cdot = \cdot_{V} \, : V \times V \to V.
$$
Let $\Theta(P, V) = (\rightharpoonup, \, \triangleleft, \,
\vartheta, \, \cdot)$ be a Hochschild datum and we denote by $P
\star V = P \star_{(\triangleleft, \rightharpoonup, \vartheta,
\cdot)} V $ the vector space $P \times V$ with the multiplication
given by
\begin{equation} \eqlabel{hoproduct2}
(p, \, x) \star (q, \, y) := (pq, \, \vartheta (p, \, q) + p
\rightharpoonup y + x \triangleleft q + x \cdot y)
\end{equation}
for all $p$, $q\in P$, $x$, $y \in V$. $P \star V$ is called the
\emph{Hochschild (or crossed) product} associated to $\Theta(P,
V)$ if it is an associative algebra with the multiplication given
by \equref{hoproduct2}. In this case the Hochschild datum
$\Theta(P, V) = (\rightharpoonup, \, \triangleleft, \, \vartheta,
\, \cdot)$ is called a \emph{Hochschild (or crossed) system} of
$P$ by $V$ and we denote by ${\mathcal H} {\mathcal S} (P, \, V)$
the set of all Hochschild systems of $P$ by $V$.
\end{definition}

It was proved in \cite[Proposition 1.2]{am-2013c} that $\Theta(P,
V)$ is a Hochschild system if and only if the following
compatibility conditions hold for any $p$, $q$, $r\in P$ and $x$,
$y\in V$:
\begin{enumerate}
\item[(H0)] $(V, \cdot)$ is an associative algebra

\item[(H1)] $(x \cdot y) \lhd p = x \cdot (y \lhd p)$

\item[(H2)] $(x \lhd p) \cdot y = x \cdot (p \rightharpoonup y)$

\item[(H3)] $p \rightharpoonup (x \cdot y) = (p \rightharpoonup x)
\cdot y$

\item[(H4)] $(p \rightharpoonup x) \lhd q = p \rightharpoonup (x
\lhd q)$

\item[(H5)] $\vartheta(p, \, q) \lhd r = \vartheta(p, \, qr) -
\vartheta(pq, \, r) + p \rightharpoonup \vartheta(q, \, r)$

\item[(H6)] $(pq) \rightharpoonup x = p \rightharpoonup (q
\rightharpoonup x) - \vartheta(p, \, q) \cdot x$

\item[(H7)] $x \lhd (pq) = (x \lhd p) \lhd q - x \cdot
\vartheta(p, \, q)$.
\end{enumerate}
The Hochschild product was introduced in \cite[Theorem 6.2]{Hoch2}
in the special case when $\cdot$ is the trivial multiplication on
$V$ (i.e. $x\cdot y = 0$, for all $x$, $y\in V$). We point out
that in this case axioms (H0)- (H7) reduce to $(V,
\rightharpoonup, \triangleleft)$ being an $P$-bimodule and
$\vartheta : P \times P \to V$ being a $2$-cocycle.

If $P \star V$ is a Hochschild product, then the map $\pi_P : P
\star V \to P$, $\pi_P (p, x) := p$ is a surjective algebra
morphism and ${\rm Ker} (\pi) = 0 \times V \cong V$. Conversely,
\cite[Proposition 1.4]{am-2013c} proves the following: if $P$ is
an algebra, $E$ a vector space and $\pi : E \to P$ an epimorphism
of vector spaces with $V = {\rm Ker} (\pi)$, then any algebra
structure $\cdot_E$ which can be defined on the vector space $E$
such that $\pi : (E, \cdot_E) \to P$ becomes a morphism of
algebras is isomorphic to a Hochschild product $P \star V$ and
moreover, the isomorphism of algebras $ (E, \cdot_E) \cong P \star
V$ can be chosen such that it stabilizes $V$ and co-stabilizes
$P$.

\subsection*{The crossed product of Lie algebras.}
We shall recall the construction of the crossed product for Lie
algebras following the terminology of \cite[Section 3]{am-2013}.

\begin{definition} \delabel{crosslie}
Let $P = (P, \, [-,\, -])$ be a Lie algebra and $V$ a vector
space. A \emph{pre-crossed data} of $P$ by $V$ is a system $\Lambda
(P, V) = \bigl(\triangleright, \, f, [-, \, -]_V \bigl)$
consisting of three bilinear maps
$$
\triangleright \, : P \times V \to V, \quad f: P \times P \to V,
\quad [-, \, -]_V \, : V \times V \to V.
$$
Let $\Lambda (P, V) = \bigl(\triangleright, \, f, [-, \, -]_V \bigl)$
be a pre-crossed data of $P$ by $V$ and we denote by
$P \# \, V = P \#_{\triangleright}^f \, V$ the vector
space $P \times \, V $ with the bracket given for any $p$, $q \in
P$ and $x$, $y \in V$ by:
\begin{equation} \eqlabel{crossedliedef}
\{ (p, x), \, (q, y) \} := \bigl( [p, q], \, f(p, q) + p
\triangleright y - q \triangleright x + [x, \, y]_V \bigl).
\end{equation}
Then $P \# \, V$ is called the \emph{crossed
product} associated to $\Lambda (P, V) =
\bigl(\triangleright, \, f, [-, \, -]_V \bigl)$ if it is a Lie algebra with the
bracket \equref{crossedliedef}. In this case the pre-crossed data $\Lambda (P, V) =
\bigl(\triangleright, \, f, [-, \, -]_V \bigl)$ is called a
\emph{crossed system} of $P$ by $V$ and we denote by
${\mathcal L} {\mathcal S} (P, \, V)$ the set of all crossed
systems of the Lie algebra $P$ by $V$.
\end{definition}

As a special case of \cite[Theorem 2.2]{am-2013} or by a
straightforward computation it is easy to see that $\Lambda (P, V)
= \bigl(\triangleright, \, f, [-, \, -]_V \bigl)$ is a crossed
system of $P$ by $V$ if and only if the following compatibilities
hold for any $p$, $q$, $r \in P$ and $x$, $y\in V$:
\begin{enumerate}
\item[(L0)] $(V, [-, \, -]_V)$ is a Lie algebra

\item[(L1)] $ f(p, p) = 0 $

\item[(L2)] $ p \triangleright [x, \, y]_V = [p \triangleright x ,
\, y]_V + [x, \, p \triangleright y]_V$

\item[(L3)] $[p, \, q] \triangleright x = p \triangleright (q
\triangleright x) - q\triangleright (p \triangleright x) + [x, \,
f (p, q)]_V$

\item[(L4)] $f(p, [q, \, r ] ) + f(q, [r, \, p ]) + f(r, [p, \, q]
) + p\triangleright f(q, r) + q\triangleright f(r, p) + r
\triangleright f(p, q) = 0$.
\end{enumerate}

\begin{example}\exlabel{licoflag}
Let $P$ be a Lie algebra having $\{e_i \, | \, i\in I \}$ as a
basis. Then there exists a bijection between the set ${\mathcal L}
{\mathcal S} (P, \, k)$ of all crossed systems of $P$ by $k$ and
the set of pairs $(\lambda, f)$ consisting of a linear map
$\lambda : P \to k$ and a bilinear map $f : P \times P \to k$
satisfying the following compatibility conditions for all $p$,
$q$, $r\in P$:
\begin{eqnarray*}
&& f (p, p) = \lambda ([p, \, q]) = 0 \\
&& f(p, [q, \, r ] ) + f(q, [r, \, p ]) + f(r, [p, \, q] ) +
\lambda (p) f(q, r) + \lambda (q) f(r, p) + \lambda (r) f(p, q) =
0.
\end{eqnarray*}
Under this bijection the crossed product $P \#_{\lambda}^f \, k$
corresponding to $(\lambda, f)$ is the Lie algebra having $\{x, \,
e_i \, | \, i\in I \}$ as a basis and the bracket $\left[-, \, -
\right]_{\lambda}^f$ defined for any $i$, $j\in I$ by:
\begin{eqnarray*}
\left[e_i, \, e_j \right]_{\lambda}^f :=  \left[e_i, \, e_j
\right] + f(e_i, \, e_j) \, x, \quad \left[e_i, \, x
\right]_{\lambda}^f := \lambda (e_i) \, x.
\end{eqnarray*}

Indeed, since $V := k$ the Lie bracket on $k$ is the trivial map
$[-, -] = 0$. Moreover, any bilinear map $\triangleright : P
\times k \to k$ is implemented by a unique linear map $\lambda : P
\to k$ such that $p \triangleright 1_k = \lambda (p)$. The rest of
the proof is straightforward.
\end{example}

\section{Crossed products and the global extension problem}\selabel{unifiedprod}
In this section we shall give the theoretical answer to the
GE-problem. First we introduce the following:

\begin{definition}\delabel{exdatum}
Let $P$ be a Poisson algebra and $V$ a vector space. A
\textit{pre-crossed datum of $P$ by $V$} is a system $\Omega(P, V)
= \bigl(\rightharpoonup, \, \triangleleft, \, \vartheta, \,
\cdot_V, \, \triangleright, \, f, [-, \, -]_V \bigl)$ consisting
of seven bilinear maps
$$
\rightharpoonup \, \, : P \times V \to V, \quad \triangleleft : V
\times P \to V, \quad \vartheta \,\, : P \times P \to V, \quad
\cdot_V \, : V \times V \to V
$$
$$
\triangleright \, : P \times V \to V, \quad f: P \times P \to V,
\quad [-, \, -]_V \, : V \times V \to V.
$$
Let $\Omega(P, V) = \bigl(\rightharpoonup, \, \triangleleft, \,
\vartheta, \, \cdot_V, \, \triangleright, \, f, [-, \, -]_V
\bigl)$ be a pre-crossed datum of $P$ by $V$. We denote by $ P \,
\sharp_{\Omega(P, V)} \, V = P \, \sharp \, V$ the vector space $P
\, \times V$ together with the multiplication $\star$ given by
\equref{hoproduct2} and the bracket $\{-, \, -\}$ given by
\equref{crossedliedef}, i.e.
\begin{eqnarray}
(p, x) \star (q, y) &:=& (p q, \,\, \vartheta (p, q) + p
\rightharpoonup y + x \triangleleft q + x \cdot_V y)
\eqlabel{crpos1} \\
\{ (p, x), \, (q, y) \} &:=& \bigl( [p, q], \,\, f(p, q) + p
\triangleright y - q \triangleright x + [x, \, y]_V \bigl).
\eqlabel{crpos2}
\end{eqnarray}
for all $p$, $q\in P$, $x$, $y \in V$. The object $P \, \sharp \,
V = (P \times V, \, \star, \{-, \, - \})$ is called the
\textit{crossed product} of $P$ and $\Omega(P, V)$ if it is a
Poisson algebra. In this case the pre-crossed datum $\Omega(P, V)
$ is called a \textit{crossed system} of the Poisson algebra $P$
by $V$. The maps $\leftharpoonup$, $\triangleleft$,
$\triangleright$ are called the \textit{actions} of $\Omega(P, V)$
while $\vartheta$ and $f$ are called the \textit{cocycles} of
$\Omega(P, V)$.
\end{definition}

The next theorem provides the necessary and sufficient conditions
that need to be fulfilled by a pre-crossed datum $\Omega(P, V)$
such that $P \sharp V$ is a Poisson algebra.

\begin{theorem}\thlabel{1}
Let $\Omega(P, V) = \bigl(\rightharpoonup, \, \triangleleft, \,
\vartheta, \, \cdot_V, \, \triangleright, \, f, [-, \, -]_V
\bigl)$ be a pre-crossed datum of a Poisson algebra $P$ by a
vector space $V$. Then $P \sharp \, V$ is a Poisson algebra if and
only if the following compatibilities hold:
\begin{enumerate}
\item[(P0)] $\bigl(\rightharpoonup, \, \triangleleft, \,
\vartheta, \, \cdot_V \bigl)$ is a Hochschild system of the
algebra $P$ by $V$, $\bigl(\triangleright, \, f, [-, \, -]_V
\bigl)$ is a crossed system of the Lie algebra $P$ by $V$ and $(V,
\, \cdot_V, \, [-, \, -]_V)$ is a Poisson algebra

\item[(P1)] $f (pq, \, r) - f(p,\, r) \triangleleft q - \, p
\rightharpoonup f(q, \, r) = r \triangleright \vartheta(p, \, q) +
\vartheta ([p, \, r], \, q) + \vartheta (p, \, [q, \, r])$

\item[(P2)] $ (pq) \triangleright x = p \rightharpoonup (q
\triangleright x) + (p \triangleright x) \triangleleft q -
[\vartheta(p, \, q), \, x]_V$

\item[(P3)] $[p, \, q] \rightharpoonup x = p \rightharpoonup (q
\triangleright x) - q \triangleright (p \rightharpoonup x) - f(p,
\, q) \cdot_V  x $

\item[(P4)] $p \rightharpoonup [x, \, y]_V = [p \rightharpoonup x,
\, y]_V - (p \triangleright y)\cdot_V x $

\item[(P5)] $x \triangleleft [p, \, q] = (q \triangleright x)
\triangleleft p - q \triangleright (x \triangleleft p) - x \cdot_V
f(p, \, q)$

\item[(P6)] $ [x, \, y]_{V} \triangleleft p = [x \triangleleft p,
\, y]_V - x \cdot_V (p \triangleright y)$

\item[(P7)] $ p \triangleright (x \cdot_V y) = (p \triangleright x
) \cdot_V y + x \cdot_V (p \triangleright y)$
\end{enumerate}
for all $p$, $q$, $r \in P$ and $x$, $y \in V$.
\end{theorem}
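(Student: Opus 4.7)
The plan is to decompose the requirement that $P \sharp V$ be a Poisson algebra into three independent conditions and handle each separately. First, the associative product $\star$ on $P \sharp V$ is, by construction, the Hochschild product of $P$ by $V$ recalled in \seref{prel}; hence $(P \sharp V, \star)$ is associative if and only if $(\rightharpoonup, \triangleleft, \vartheta, \cdot_V)$ is a Hochschild system of the algebra $P$ by $V$, i.e.\ axioms (H0)--(H7) hold. Symmetrically, the bracket $\{-, -\}$ coincides with the crossed product bracket for Lie algebras from \deref{crosslie}, so it is a Lie bracket if and only if $(\triangleright, f, [-, -]_V)$ is a crossed system of the Lie algebra $P$ by $V$, i.e.\ axioms (L0)--(L4) hold. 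These two observations account for the associativity of $\star$, the Jacobi identity for $\{-, -\}$, and the first portion of axiom (P0).

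It remains to analyze the Leibniz identity
\[
\{(p, x) \star (q, y),\, (r, z)\} \;=\; \{(p,x),\, (r,z)\} \star (q, y) + (p, x) \star \{(q, y),\, (r, z)\}
\]
for arbitrary $p, q, r \in P$ and $x, y, z \in V$. Expanding both sides via \equref{crpos1} and \equref{crpos2}, the $P$-component of this identity reduces to $[pq, r] = [p, r]\, q + p\, [q, r]$, which is nothing but the Leibniz law in the Poisson algebra $P$; therefore the entire content of the Leibniz identity in $P \sharp V$ lives in its $V$-component.

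By trilinearity in the three arguments, it suffices to verify the $V$-component when each of $(p, x)$, $(q, y)$, $(r, z)$ is \emph{pure}, that is, either of the form $(p, 0)$ or of the form $(0, x)$; this yields $2^3 = 8$ subcases. A direct substitution produces exactly one required compatibility in each case: the all-$P$ case gives (P1); the three cases with two $P$-entries and one $V$-entry (with the $V$-entry sitting in the third, second, or first slot) yield (P2), (P3) and (P5) respectively; the three cases with one $P$-entry and two $V$-entries yield (P4), (P6) and (P7); and the all-$V$ case produces the Leibniz law linking $\cdot_V$ and $[-, -]_V$, which together with (H0) and (L0) expresses that $(V, \cdot_V, [-, -]_V)$ is a Poisson algebra, completing (P0). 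Conversely, once (P0)--(P7) are assumed, reading all eight computations backwards and invoking trilinearity re-establishes the Leibniz identity on every triple in $P \sharp V$.

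The principal challenge in executing this proof is purely organizational: each triple bracket expands into a large number of terms, each involving a different subset of the seven bilinear maps that comprise $\Omega(P, V)$, and the task is to track which terms arise in each of the eight cases and verify the necessary cancellations. No conceptual obstacle arises once the case decomposition by the ``type'' of the arguments has been fixed.
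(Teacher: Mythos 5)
Your proposal is correct and follows essentially the same route as the paper's own proof: reduce (P0) (minus the Poisson condition on $V$) to the known Hochschild and Lie crossed-system characterizations, then check the Leibniz identity on the eight pure triples, obtaining (P1)--(P7) and the Leibniz law on $(V,\cdot_V,[-,\,-]_V)$ in exactly the same case-by-case correspondence. No gaps.
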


\begin{proof}
We have already noticed in Preliminaries that $(P \sharp \, V,
\star)$ is an associative algebra if and only if
$\bigl(\rightharpoonup, \, \triangleleft, \, \vartheta, \, \cdot_V
\bigl)$ is a Hochschild system of the associative algebra $P$ by
$V$ and $(P \sharp \, V, \{-, \, -\})$ is a Lie algebra if and
only $\bigl(\triangleright, \, f, [-, \, -]_V \bigl)$ is a crossed
system of the Lie algebra $P$ by $V$. These are the first two
assumptions from (P0) which from now on we assume to be fulfilled.
Then, $(P \sharp \, V, \star, \{-, \, -\})$ is a Poisson algebra
if and only if the Leibniz identity holds, i.e.
\begin{equation}\eqlabel{005}
\{ (p, x) \star (q, y), \, (r, z) \} = \{(p, x), \, (r, z)\} \star
(q, y) + (p, x) \star \{(q, y), \, (r, z) \}
\end{equation}
for all $p$, $q$, $r\in P$ and $x$, $y$, $z\in V$. The rest of the
proof relies on a detailed analysis of the Leibniz identity
\equref{005}: since in $P \times V$ we have $(p, x) = (p, 0) + (0,
x)$ it follows that \equref{005} holds if and only if it holds for
all generators of $P \times V$, i.e. for the set $\{(p, \, 0) ~|~
p \in P\} \cup \{(0, \, x) ~|~ x \in V\}$. However, since the
computations are rather long but straightforward we will only
indicate the main steps of the proof, the details being left to
the reader.

We will start by proving that \equref{005} holds for the triple
$(p, 0)$, $(q, 0)$, $(r, 0)$ if and only if (P1) holds. Indeed, we
can easily see that the left hand side of \equref{005}, evaluated
at $(p, 0)$, $(q, 0)$, $(r, 0)$ is equal to $\bigl( [pq, \, r], \,
f(pq, \, r) - r \triangleright \vartheta (p, q) \bigl)$ while the
right hand side of \equref{005} is
$$
\bigl( [p, r] q + p [q, r], \, \vartheta ([p, \, r], \, q) + f (p,
r) \triangleleft q +  \vartheta (p, \, [q, \, r]) + p
\rightharpoonup f(q, r) \bigl).
$$
Since $P$ is a Poisson algebra we obtain that \equref{005} holds
for the triple $(p, 0)$, $(q, 0)$, $(r, 0)$ if and only if (P1)
hold.

In the same manner we can prove the following: \equref{005} holds
for the triple $(p, 0)$, $(q, 0)$, $(0, x)$ if and only if (P2)
holds; \equref{005} holds for the triple $(p, 0)$, $(0, x)$, $(q,
0)$ if and only if (P3) holds; \equref{005} holds for the triple
$(p, 0)$, $(0, x)$, $(0, y)$ if and only if (P4) holds;
\equref{005} holds for the triple $(0, x)$, $(p, 0)$, $(q, 0)$ if
and only if (P5) holds; \equref{005} holds for the triple $(0,
x)$, $(p, 0)$, $(0, y)$ if and only if (P6) holds and \equref{005}
holds for the triple $(0, x)$, $(0, y)$, $(p, 0)$ if and only if
(P7) holds. Finally, one can see that \equref{005} holds for the
triple $(0, x)$, $(0, y)$, $(0, z)$ if and only if
$$
[x \cdot_V y, \, z]_V = [x, \, z]_V \cdot_V y + x \cdot_V [y, \,
z]_V
$$
i.e. the Leibniz low holds for $(V, \cdot_V, [-, -]_V)$, that is
$(V, \, \cdot_V, \, [-, \, -]_V)$ is a Poisson algebra and this
finishes the proof.
\end{proof}

From now on, a crossed system of a Poisson algebra $P$ by a vector
space $V$ will be viewed as a pre-crossed datum $\Omega(P, V) =
\bigl(\rightharpoonup, \, \triangleleft, \, \vartheta, \, \cdot_V,
\, \triangleright, \, f, [-, \, -]_V \bigl)$ satisfying the
compatibility conditions (P0)-(P7) of \thref{1}. We denote by
${\mathcal P} {\mathcal S} (P, V)$ the set of all crossed systems
of $P$ by $V$. We also use the following convention: if one of the
maps of a crossed system $\Omega(P, V) = \bigl(\rightharpoonup, \,
\triangleleft, \, \vartheta, \, \cdot_V, \, \triangleright, \, f,
[-, \, -]_V \bigl)$ is the trivial one, then we will omit it from
the system $\Omega(P, V)$.

\begin{example}\exlabel{twistedproduct}
Let $\Omega(P, V) = \bigl(\rightharpoonup, \, \triangleleft, \,
\vartheta, \, \cdot_V, \, \triangleright, \, f, [-, \, -]_V
\bigl)$ be a pre-crossed datum of a Poisson algebra $P$ by $V$
such that $\rightharpoonup$, $\triangleleft$, $\vartheta$,
$\triangleright$, $f$ are all the trivial maps. Then $\Omega(P,
V)$ is a crossed system of $P$ by $V$ if and only if $(V, \,
\cdot_V, \, [-, \, -]_V \bigl)$ is a Poisson algebra. The
associated crossed product $P \, \sharp \, V$ is just the direct
product $P \times V$ of Poisson algebras.
\end{example}

Any crossed product $P \, \sharp \, V$ is an extension of the Poisson algebra
$P$ by $V$ via the canonical maps
\begin{eqnarray} \eqlabel{extenho1}
\xymatrix{ 0 \ar[r] & V \ar[r]^{i_{V}} & P \, \sharp \,
V\ar[r]^{\pi_{P}} & P \ar[r] & 0 }
\end{eqnarray}
where $i_V (v) = (0, v)$ and $\pi_P (p, x) := p$. Conversely, the
crossed product is the tool to answer the global extension problem
for Poisson algebras. More precisely, the next result provides the
answer to the description part of the GE-problem and can be seen
as a generalization at the level of Poisson algebras of
\cite[Theorem 5.2]{Hoch2}:

\begin{proposition}\prlabel{hocechiv}
Let $P$ be a Poisson algebra, $E$ a vector space and $\pi : E \to
P$ an epimorphism of vector spaces with $V = {\rm Ker} (\pi)$.
Then any Poisson algebra structure $(\cdot_E, \, [-, \, -]_E)$
which can be defined on $E$ such that $\pi : (E, \, \cdot_E, \,
[-, \, -]_E) \to P$ is a morphism of Poisson algebras is
isomorphic to a crossed product $P \, \sharp \, V$ and moreover,
the isomorphism of Poisson algebras $ (E, \, \cdot_E, \, [-, \,
-]_E) \cong  P \, \sharp \, V$ can be chosen such that it
stabilizes $V$ and co-stabilizes $P$.

In particular, any Poisson algebra extension of $P$ by $V$ is
cohomologous to a crossed product extension \equref{extenho1}.
\end{proposition}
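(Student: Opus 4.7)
My plan is to construct an explicit isomorphism of Poisson algebras between $(E, \cdot_E, [-,-]_E)$ and a suitable crossed product $P \, \sharp \, V$, using a linear section of $\pi$.

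First, since $\pi : E \to P$ is a surjective $k$-linear map, I would pick a $k$-linear section $s : P \to E$ of $\pi$ (so $\pi \circ s = \Id_P$). This yields a $k$-linear isomorphism
$$
\varphi : P \times V \to E, \qquad \varphi(p, x) := s(p) + x,
$$
with inverse $e \mapsto \bigl(\pi(e), \, e - s(\pi(e))\bigr)$. By construction $\varphi(0, x) = x$ and $\pi \circ \varphi$ is the projection onto the first factor, so $\varphi$ stabilizes $V$ and co-stabilizes $P$.

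Next, I would define the seven bilinear maps of a pre-crossed datum $\Omega(P, V)$ by transporting the operations of $E$ through $s$, namely
\begin{eqnarray*}
\vartheta(p, q) &:=& s(p) \cdot_E s(q) - s(pq), \qquad f(p, q) := [s(p), s(q)]_E - s([p, q]),\\
p \rightharpoonup x &:=& s(p) \cdot_E x, \qquad x \triangleleft p := x \cdot_E s(p), \qquad p \triangleright x := [s(p), x]_E,\\
x \cdot_V y &:=& x \cdot_E y, \qquad [x, y]_V := [x, y]_E.
\end{eqnarray*}
Since $\pi$ is a morphism of Poisson algebras, applying $\pi$ to each formula shows at once that these seven maps all take values in $V = \Ker(\pi)$.

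The only non-trivial (though routine) step I foresee is the expansion, via bilinearity, of $(s(p) + x) \cdot_E (s(q) + y)$ and $[s(p) + x, \, s(q) + y]_E$; using the identity $[x, s(q)]_E = - q \triangleright x$, these expansions yield precisely the right-hand sides of \equref{crpos1} and \equref{crpos2} read through $\varphi$. Consequently $\varphi$ is simultaneously an algebra and a Lie algebra isomorphism between $P \, \sharp \, V$ and $(E, \cdot_E, [-,-]_E)$. Since the latter is a Poisson algebra, so is the former, and \thref{1} then forces $\Omega(P, V)$ to satisfy the compatibilities (P0)--(P7), so $\Omega(P, V)$ is indeed a genuine crossed system. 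The second statement follows by applying this construction to the epimorphism $p$ of an arbitrary extension \equref{extencros0}.
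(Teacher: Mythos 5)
Your proposal is correct and follows essentially the same route as the paper: choose a linear section $s$ of $\pi$, transport the operations of $E$ through $s$ to obtain the seven structure maps, and check that $\varphi(p,x) = s(p)+x$ identifies $(E, \cdot_E, [-,-]_E)$ with the resulting crossed product $P \, \sharp \, V$ while stabilizing $V$ and co-stabilizing $P$. The only cosmetic difference is that the paper delegates the associative half of the bilinear expansion to \cite[Proposition 1.4]{am-2013c} rather than writing it out, and your explicit remark that the structure maps land in $V = \Ker(\pi)$ is a worthwhile detail the paper leaves implicit.
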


\begin{proof}
Indeed, let $(\cdot_E, \, [-, \, -]_E)$ be a Poisson algebra
structure of $E$ such that $\pi: (E, \, \cdot_E, \, [-, \, -]_E)
\to P$ is a morphism of Poisson algebras. Let $s : P \to E$ be a
$k$-linear section of $\pi$, i.e. $\pi
\circ s = {\rm Id}_{P}$. Using this section $s$ we define a
pre-crossed datum $\Omega(P, V) = \bigl(\rightharpoonup =
\rightharpoonup_s, \, \triangleleft = \triangleleft_s, \,
\vartheta = \vartheta_s, \, \cdot_V = \cdot_{V, s}, \,
\triangleright = \triangleright_s, \, f=f_s, [-, \, -]_V = [-, \,
-]_{V, s} \bigl)$ of $P$ by $V$ by the following formulas:
\begin{eqnarray*}
&& \rightharpoonup : P \times V \to V, \quad \,\,\,\,\,\,\, p
\rightharpoonup x := s(p) \cdot_E \, x, \quad \,\,\, \triangleleft
: V \times P \to V, \quad x \triangleleft p := x \cdot_E \, s(p)\\
&& \vartheta  \, : P \times P \to V, \,\, \vartheta (p, \, q) :=
s(p) \cdot_E \, s(q) - s(pq), \quad \cdot_V : V \times V
\to V, \,\,  x \cdot_V \, y := x \cdot_E \, y \\
&& \triangleright : P \times V \to V, \,\, p \triangleright x :=
[s(p), \, x]_E, \quad [-, \, -]_V : V \times V \to V, \,\, [x, \, y]_V := [x, \, y]_E \\
&& f : P \times P \to V, \quad  f(p, q) := [s(p), \, s(q)]_E - s
\bigl( [p, \, q] \bigl)
\end{eqnarray*}
for all $p$, $q\in P$ and $x$, $y\in V$. Then
$$
\varphi : P \times V \to E, \qquad \varphi (p, x) := s(p) + x
$$
is an isomorphism of vector spaces with the inverse $\varphi^{-1}
(y) = (\pi(y), \, y - s (\pi(y)) )$, for all $y\in E$. The key
step is the following: the unique Poisson algebra structure
$(\star, \, \{-, \, -\}) $ that can be defined on the direct
product of vector spaces $P \times V$ such that $\varphi : P
\times V \to (E, \cdot_E, [-, \, -]_E)$ is an isomorphism of
Poisson algebras has the multiplication $\star$ given by
\equref{crpos1} and the bracket $\{-, \, -\}$ given by
\equref{crpos2} associated to the system $\bigl(\rightharpoonup,
\, \triangleleft, \, \vartheta, \, \cdot_V, \, \triangleright, \,
f, \, [-, \, -]_V \bigl)$ as defined above arising from $s$.
Indeed, \cite[Proposition 1.4]{am-2013c} proves that the unique
multiplication $\star$ that can be defined on $P \times V$ such
that $\varphi: P \times V \to (E, \cdot_E)$ is an isomorphism of
algebras is the one given by \equref{crpos1} associated to the
Hochschild system $(\rightharpoonup_s, \, \triangleleft_s, \,
\vartheta_s, \, \cdot_{V, s})$. In a similar manner we can show
that the unique bracket $\{-, \, - \}$ that can be defined on $P
\times V$ such that $\varphi: P \times V \to (E, [-, \, -]_E)$ is
an isomorphism of Lie algebras is the one given by \equref{crpos2}
associated to the crossed system $(\triangleright_s, \, f_s, [-,
\, -]_{V, s})$ of the Lie algebra $P$ by $V$. Thus, $\varphi : P
\, \sharp \, V \to (E, \cdot_E, [-, \, -]_E)$ is an isomorphism of
Poisson algebras that stabilizes $V$ and co-stabilizes $P$.
\end{proof}

\begin{example}
Among all special cases of crossed products the most important is
the semidirect product since, exactly as in the case of groups,
Lie algebras or associative algebras, the semidirect products will
describe the split epimorphisms in the category of Poisson
algebras. Let $P$ and $V = (V, \, \cdot_V, \, [-, \, -]_V)$ be two
given Poisson algebras. Then a \emph{semi-direct system} of $P$ by
$V$ is a system consisting of three bilinear maps
$\bigl(\rightharpoonup, \, \triangleleft, \, \triangleright
\bigl)$ such that $\bigl(\rightharpoonup, \, \triangleleft, \,
\vartheta := 0, \, \cdot_V, \, \triangleright, \, f := 0, \, [-,
\, -]_V \bigl) = \bigl(\rightharpoonup, \, \triangleleft, \,
\cdot_V, \, \triangleright, \, [-, \, -]_V \bigl)$ is a crossed
system of $P$ by $V$. If $\bigl(\rightharpoonup, \, \triangleleft,
\, \triangleright \bigl)$ is a semi-direct system between $P$ and
$V$, then the associated crossed product $P \, \sharp \, V$ is
denoted by $P \rtimes V$ and is called the \emph{semi-direct
product} of $P$ and $V$. Thus, $P \rtimes V$ is the vector space
$P \times V$ with the Poisson algebra structure defined by:
\begin{eqnarray}
(p, x) \star (q, y) &:=& (p q, \, p \rightharpoonup y + x
\triangleleft q + x \cdot_V y)
\eqlabel{semi1} \\
\{ (p, x), \, (q, y) \} &:=& \bigl( [p, q], \, p \triangleright y
- q \triangleright x + [x, \, y]_V \bigl) \eqlabel{semi2}
\end{eqnarray}
for all $p$, $q\in P$, $x$, $y \in V$. If $P \rtimes V$ is a
semidirect product of Poisson algebras then the canonical
projection $\pi_P : P \rtimes V \to P$, $\pi_P (p, x) := p$ is a
morphism of Poisson algebras that has a section $s_P : P \to P
\rtimes V$, $s_P (p) := (p, 0)$ that is also a morphism of Poisson
algebras. Conversely, the semidirect product of Poisson algebras
describes the split epimorphisms in the category of Poisson
algebras. Indeed, let $\pi: E \to P$ be a morphism of Poisson
algebras which has a section that is a Poisson algebra map. Then
there exists an isomorphism of Poisson algebras $E \cong P \rtimes
V$, where $P \rtimes V$ is the semidirect product between $P$ and
$V = {\rm Ker} (\pi)$. The result follows from the proof of
\prref{hocechiv}: if $s: P \to E$ is a morphism of Poisson
algebras, then the cocycles $f = f_s$ and $\vartheta =
\vartheta_s$ constructed there are both the trivial maps, i.e. the
corresponding crossed product $P \, \sharp \, V$ is a semidirect
product $P \rtimes V$.
\end{example}

Based on \prref{hocechiv} the answer to the classification part of
the GE-problem reduces to the classification of all crossed
products associated to all crossed systems between $P$ and $V$.
This is what we do next by explicitly constructing a
classification object, denoted by ${\mathcal G} {\mathcal P}
{\mathcal H}^{2} \, (P, \, V)$. First we need the following
technical result:

\begin{lemma}\lelabel{HHH}
Let $P$ be a Poisson algebra, $\Omega(P, V) =
\bigl(\rightharpoonup, \, \triangleleft, \, \vartheta, \, \cdot_V,
\, \triangleright, \, f, [-, \, -]_V \bigl)$ and $\Omega '(P, V) =
\bigl(\rightharpoonup', \, \triangleleft', \, \vartheta', \,
\cdot_V', \, \triangleright', \, f', [-, \, -]_V' \bigl)$ two
crossed systems of $P$ by a vector space $V$ and $P \, \sharp \,
V$, respectively $P \, \sharp ' \, V$, the corresponding crossed
products. Then there exists a bijection between the set of all
morphisms of Poisson algebras $\psi: P \, \sharp \, V \to P \,
\sharp ' \, V$ which stabilize $V$ and co-stabilize $P$ and the
set of all linear maps $r: P \to V$ satisfying the following
compatibilities for all $p$, $q \in P$, $x$, $y \in V$:
\begin{enumerate}
\item[(M1)] $x \cdot_V y = x \cdot_V ' y$

\item[(M2)] $x \triangleleft p = x \triangleleft ' p + x \cdot_V '
r(p)$

\item[(M3)] $p \rightharpoonup x = p \rightharpoonup ' x + r(p)
\cdot_V ' \, x$

\item[(M4)] $\vartheta(p, \, q) = \vartheta '(p, \, q) + p
\rightharpoonup' r(q) + r(p) \lhd ' q - r(p q)  + r(p) \cdot_V '
r(q)$

\item[(M5)] $[x, \, y]_V = [x, \, y]_V'$

\item[(M6)] $p \triangleright x = p \triangleright' x + [r(p), \,
x]_V'$

\item[(M7)] $f (p, \, q) = f' (p, \, q) + p \triangleright' r(q) -
q \triangleright' r(p) + [r(p), \, r(q)]_V' - r \bigl([p, \,
q]\bigl)$.
\end{enumerate}
Under the above bijection the morphism of Poisson algebras $\psi =
\psi_{r}: P \, \sharp \, V \to P \, \sharp ' \, V$ corresponding
to $r: P \to V$ is given by $ \psi(p, x) = (p, \, r(p) + x)$, for
all $p \in P$ and $x \in V$. Moreover, $\psi = \psi_{r}$ is an
isomorphism with the inverse $\psi^{-1}_{r} = \psi_{-r}$.
\end{lemma}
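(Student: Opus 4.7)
The plan is to reduce the statement to a direct computation by first identifying the shape of any admissible $\psi$ and then translating the compatibility with $\star$ and $\{-,-\}$ into the seven equations (M1)--(M7).

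First I would observe that a linear map $\psi: P\,\sharp\, V \to P\,\sharp'\, V$ which stabilizes $V$ and co-stabilizes $P$ is automatically of the form $\psi(p,x) = (p,\, r(p)+x)$ for a uniquely determined linear map $r: P \to V$. Indeed, co-stabilization of $P$ forces $\pi_P \circ \psi = \pi_P$, so the first coordinate of $\psi(p,x)$ is $p$; stabilization of $V$ forces $\psi(0,x) = (0,x)$; and by linearity $\psi(p,x) = \psi(p,0) + \psi(0,x)$, which determines $r$ via $\psi(p,0) =: (p, r(p))$. Conversely, every linear $r: P \to V$ produces such a map, and the composition $\psi_r \circ \psi_{-r} = \psi_{-r} \circ \psi_r = \mathrm{Id}$ is immediate, giving the bijectivity claim and the formula for the inverse.

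Next I would test when such a $\psi_r$ is a morphism of Poisson algebras. Since $\psi_r$ is linear and $P \times V$ is generated as a vector space by the elements $(p,0)$ and $(0,x)$, it is enough to verify the two identities
\begin{equation*}
\psi_r\bigl((p,x) \star (q,y)\bigr) = \psi_r(p,x) \star' \psi_r(q,y),\qquad \psi_r\bigl(\{(p,x),(q,y)\}\bigr) = \{\psi_r(p,x), \psi_r(q,y)\}'
\end{equation*}
on the four pairs of generators $(p,0)\&(q,0)$, $(p,0)\&(0,y)$, $(0,x)\&(q,0)$, $(0,x)\&(0,y)$. Plugging into the definitions \equref{crpos1} and \equref{crpos2} of the two crossed products, a routine expansion shows:
the associative identity on $(p,0), (q,0)$ becomes (M4); on $(p,0),(0,y)$ becomes (M3); on $(0,x),(q,0)$ becomes (M2); and on $(0,x),(0,y)$ becomes (M1). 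Analogously, the bracket identity on $(p,0), (q,0)$ yields (M7); on $(p,0),(0,y)$ yields (M6); the case $(0,x),(q,0)$ yields (M6) again (equivalently, by antisymmetry of the bracket); and $(0,x),(0,y)$ yields (M5).

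The main technical point is simply to keep the bookkeeping straight: for example, expanding $\psi_r\bigl((p,0) \star (q,0)\bigr) = \psi_r(pq,\, \vartheta(p,q)) = (pq,\, r(pq) + \vartheta(p,q))$ and $\psi_r(p,0) \star' \psi_r(q,0) = (p,r(p)) \star' (q,r(q)) = \bigl(pq,\, \vartheta'(p,q) + p \rightharpoonup' r(q) + r(p) \triangleleft' q + r(p) \cdot_V' r(q)\bigr)$ and comparing second coordinates produces exactly (M4). The other six compatibilities fall out in the same way. There is no substantive obstacle beyond this calculation, and once it is carried out the equivalence between the set of Poisson-algebra morphisms stabilizing $V$ and co-stabilizing $P$ and the set of linear maps $r: P \to V$ satisfying (M1)--(M7) is complete.
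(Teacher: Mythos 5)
Your proposal is correct and follows essentially the same route as the paper: identify $\psi$ as $\psi_r(p,x)=(p,r(p)+x)$ via the stabilization conditions, then check the multiplicative and bracket compatibilities on the generators $(p,0)$ and $(0,x)$, obtaining (M1)--(M4) from $\star$ and (M5)--(M7) from $\{-,-\}$. The only difference is that the paper delegates the associative part to a citation of an earlier lemma, whereas you carry out that computation explicitly; the content is the same.
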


\begin{proof} A linear map $\psi: P \, \sharp \, V \to P \, \sharp ' \, V$
stabilizes $V$ and co-stabilizes $P$ if and only if there exists a
uniquely determined linear map $r: P \to V$ such that $\psi(p, \,
x) = (p, r(p) + x)$, for all $p \in P$, $x \in V$. Let $\psi =
\psi_{r}$ be such a linear map. Then $\psi : P \, \sharp \, V \to
P \, \sharp ' \, V$ is a morphism of associative algebras if and
only if the compatibility conditions (M1)- (M4) hold (\cite[Lemma
1.5]{am-2013c}). By a straightforward computation we can also
prove that $\psi = \psi_{r}: P \, \sharp \, V \to P \, \sharp ' \,
V$ is a morphism of Lie algebras if and only if (M5)-(M7) hold.
For this it is enough to check the compatibility condition $\psi
\Bigl( \{ (p, x), \, (q, y) \}  \Bigl) = \{ \psi(p, \, x), \,
\psi(q, \, y) \}'$ on the set of generators, i.e. for the set
$\{(p, \, 0) ~|~ p \in P\} \cup \{(0, \, x) ~|~ x \in V\}$.
\end{proof}

\leref{HHH} leads to the following definition:

\begin{definition}\delabel{echiaa}
Let $P$ be a Poisson algebra and $V$ a vector space. Two crossed
systems $\Omega(P, V) = \bigl(\rightharpoonup, \, \triangleleft,
\, \vartheta, \, \cdot_V, \, \triangleright, \, f, [-, \, -]_V
\bigl)$ and $\Omega '(P, V) = \bigl(\rightharpoonup', \,
\triangleleft', \, \vartheta', \, \cdot_V', \, \triangleright', \,
f', [-, \, -]_V' \bigl)$ are called \emph{cohomologous}, and we
denote this by $\Omega (P, V) \approx \Omega '(P, V)$, if and only
if $\cdot_V = \cdot_V '$, $[-, \, -] = [-, \, -]_V'$ and there
exists a linear map $r: P \to V$ such that for any $p$, $q \in P$,
$x$, $y \in V$ we have:
\begin{eqnarray}
p \rightharpoonup x &=& p \rightharpoonup ' x + r(p) \cdot_V  \,
x \eqlabel{coc1}\\
x \triangleleft p &=& x \triangleleft ' p + x \cdot_V  r(p) \eqlabel{coc2} \\
\vartheta(p, \, q) &=& \vartheta '(p, \, q) + p \rightharpoonup'
r(q) + r(p) \lhd ' q - r(p q)  + r(p) \cdot_V  r(q) \eqlabel{coc3}\\
p \triangleright x &=& p \triangleright' x + [r(p), \, x]_V \eqlabel{coc4}\\
f (p, \, q) &=& f' (p, \, q) + p \triangleright' r(q) - q
\triangleright' r(p) + [r(p), \, r(q)]_V - r \bigl([p, \,
q]\bigl). \eqlabel{coc5}
\end{eqnarray}
\end{definition}

\begin{example}\exlabel{coboundary}
Let $P$ and $V$ be two Poisson algebras and $\Omega(P, V) =
\bigl(\cdot_V, \, [-, \, -]_V \bigl)$ be the trivial crossed
system of the Poisson algebra $P$ by $V$ from
\exref{twistedproduct}. A crossed system $\Omega(P, V) =
\bigl(\rightharpoonup, \, \triangleleft, \, \vartheta, \, \cdot_V,
\, \triangleright, \, f, [-, \, -]_V \bigl)$ cohomologous with the
trivial crossed system is called a \emph{coboundary}. Hence,
$\Omega(P, V) = \bigl(\rightharpoonup, \, \triangleleft, \,
\vartheta, \, \cdot_V, \, \triangleright, \, f, [-, \, -]_V
\bigl)$ is a coboundary if and only if there exists a linear map
$r: P \to V$ such that $\rightharpoonup$, $\triangleleft$,
$\vartheta$, $\triangleright$ and $f$ are implemented by $r$ via
the formulas:
\begin{eqnarray*}
p \rightharpoonup x &=& r(p) \cdot_V  \, x, \quad x \triangleleft p = x \cdot_V  r(p),
\quad p \triangleright x = [r(p), \, x]_V  \\
\vartheta(p, \, q) &=& r(p) \cdot_V  r(q) - r(p q), \quad f (p, \, q) =  [r(p), \, r(q)]_V - r \bigl([p, \, q]\bigl)
\end{eqnarray*}
for all $p$, $q \in P$, $x$, $y \in V$. \leref{HHH} shows that any
crossed product $P \, \sharp \, V$ associated to a coboundary is
isomorphic to the usual direct product $P \times V$ of Poisson
algebras.
\end{example}

As a conclusion, we obtain the theoretical answer to the GE-problem
for Poisson algebras:

\begin{theorem}\thlabel{main1222}
Let $P$ be a Poisson algebra, $E$ a vector space and $\pi : E \to
P$ an epimorphism of vector spaces with $V = {\rm Ker} (\pi)$.
Then $\approx$ is an equivalence relation on the set ${\mathcal P}
{\mathcal S} (P, V)$ of all crossed systems of $P$ by $V$. If we
denote by ${\mathcal G} {\mathcal P} {\mathcal H}^{2} \, (P, \, V)
:= {\mathcal P} {\mathcal S} (P, V)/ \approx $, then the map
$$
{\mathcal G} {\mathcal P} {\mathcal H}^{2} \, (P, \, V) \to {\rm
Gext} \, (E, P), \,\,\, \overline{ (\rightharpoonup,
\triangleleft, \vartheta, \cdot_V, \, \triangleright,  f, [-, \,
-]_V \bigl) } \, \longmapsto \, P \, \sharp_{(\rightharpoonup, \,
\triangleleft, \, \vartheta, \, \cdot_V, \, \triangleright, \, f,
[-, \, -]_V \bigl)} \, V
$$
is a bijection between ${\mathcal G} {\mathcal P}{\mathcal H}^{2}
\, (P, \, V)$ and ${\rm Gext} (E, P)$.
\end{theorem}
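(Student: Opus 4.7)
The plan is to package together the two structural results already in hand: \prref{hocechiv}, which guarantees that every object of ${\mathcal C}_\pi(E,P)$ is, up to the equivalence $\approx$, a crossed product; and \leref{HHH}, which describes all morphisms between crossed products that stabilize $V$ and co-stabilize $P$. Fix, once and for all, a $k$-linear section $s \colon P \to E$ of $\pi$; the induced vector space isomorphism $\varphi_s \colon P \times V \to E$, $\varphi_s(p,x) := s(p)+x$ is then the vehicle for transporting a crossed product Poisson structure on $P \times V$ to a Poisson algebra structure on $E$ that lies in ${\mathcal C}_\pi(E,P)$. With this identification, the map of the theorem becomes the assignment $\overline{\Omega(P,V)} \mapsto \overline{\varphi_{s,*}(P \sharp_{\Omega(P,V)} V)}$, and the task is to show it is well-defined, injective and surjective.

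First I would verify that $\approx$ is an equivalence relation on ${\mathcal P}{\mathcal S}(P,V)$. Reflexivity is immediate by taking $r = 0$ in \deref{echiaa}. For symmetry and transitivity, the most efficient route is to translate the compatibilities \equref{coc1}--\equref{coc5} into their geometric content through \leref{HHH}: an equivalence $\Omega(P,V) \approx \Omega'(P,V)$ witnessed by $r \colon P \to V$ is the same datum as a Poisson isomorphism $\psi_r \colon P \sharp V \to P \sharp' V$ that stabilizes $V$ and co-stabilizes $P$. Symmetry then follows from the explicit inverse $\psi_r^{-1} = \psi_{-r}$ provided in \leref{HHH}, while transitivity follows from the fact that the composition $\psi_{r'} \circ \psi_r$ is again a linear map of the form $(p,x) \mapsto (p, (r+r')(p) + x)$, hence fits the hypotheses of \leref{HHH} with respect to the third crossed system.

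Next I would prove that the map is well-defined and bijective. Well-definedness: if $\Omega(P,V) \approx \Omega'(P,V)$ via $r$, \leref{HHH} delivers a Poisson isomorphism $\psi_r \colon P \sharp V \to P \sharp' V$ stabilizing $V$ and co-stabilizing $P$, and conjugating by $\varphi_s$ yields an isomorphism in ${\mathcal C}_\pi(E,P)$, so the two transported structures represent the same class in ${\rm Gext}(E,P)$. Surjectivity is exactly the content of \prref{hocechiv}: starting from an arbitrary $(\cdot_E, [-,-]_E) \in {\mathcal C}_\pi(E,P)$, the construction there produces, from any section $s$, a crossed system whose associated crossed product is isomorphic to $(E, \cdot_E, [-,-]_E)$ via a map stabilizing $V$ and co-stabilizing $P$. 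Injectivity: if $\varphi_{s,*}(P \sharp_\Omega V) \approx \varphi_{s,*}(P \sharp_{\Omega'} V)$ in ${\mathcal C}_\pi(E,P)$, then pulling the witnessing isomorphism back through $\varphi_s$ produces a Poisson isomorphism $P \sharp V \to P \sharp' V$ that still stabilizes $V$ and co-stabilizes $P$; \leref{HHH} then furnishes a linear map $r \colon P \to V$ satisfying (M1)--(M7), which is precisely the definition of $\Omega(P,V) \approx \Omega'(P,V)$.

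The main subtlety I expect to deal with is that a crossed product $P \sharp_{\Omega(P,V)} V$ is a Poisson algebra structure on the vector space $P \times V$ rather than on $E$ itself, so a priori the classifying map appears to depend on the auxiliary section $s$. To settle this, I would note that if $s, s' \colon P \to E$ are two sections, then $s' - s$ factors through $V$, and the change of identification $\varphi_{s'}^{-1} \circ \varphi_s$ on $P \times V$ is exactly the map $(p,x) \mapsto (p, (s'-s)(p) + x)$ of the form considered in \leref{HHH}; thus the two transported crossed products on $E$ differ by a map that stabilizes $V$ and co-stabilizes $P$, so they represent the same class in ${\rm Gext}(E,P)$. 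With this independence confirmed, the three bullets above assemble into the stated bijection.
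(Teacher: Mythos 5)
Your proposal is correct and follows essentially the same route as the paper, whose proof of this theorem is simply the one-line remark that it follows from \thref{1}, \prref{hocechiv} and \leref{HHH}; you have merely spelled out the details (equivalence relation via $\psi_r^{-1}=\psi_{-r}$ and composition, well-definedness and injectivity via \leref{HHH}, surjectivity via \prref{hocechiv}, and independence of the section) that the authors leave implicit.
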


\begin{proof} Follows from \thref{1}, \prref{hocechiv} and \leref{HHH}.
\end{proof}

\subsection*{The global vs. the local extension problem.}
Computing the classifying object ${\mathcal G} {\mathcal P}
{\mathcal H}^{2} \, (P, \, V)$ is a highly nontrivial problem. We
explain below the details as well as the connection with the
classical (i.e. local) extension problem. The extension problem
has as input data two given Poisson algebras $P$ and $V = (V,
\cdot_V, [-, \, -]_V)$ and it asks for the classification of all
extensions of $P$ by the fixed Poisson algebra $V$. We denote by
${\rm Ext} \, (P, (V, \cdot_V, [-, \, -]_V))$ the isomorphism
classes of all extensions of $P$ by $V$, i.e. up to an isomorphism
of Poisson algebras that stabilizes $V$ and co-stabilizes $P$. We
should point out that we work in the general case, with no further
assumptions on the Poisson algebras (e.g. commutative, abelian
etc.). The answer to the extension problem follows as a special
case of \thref{main1222}:

Let ${\mathcal L} {\mathcal P} {\mathcal S} (P, (V, \cdot_V, [-,
\, -]_V))$ be the set of all \emph{local crossed systems} of $P$
by $(V, \cdot_V, [-, \, -]_V)$, i.e. ${\mathcal L} {\mathcal P}
{\mathcal S} (P, (V, \cdot_V, [-, \, -]_V))$ contains the set of
all $5$-uples $\bigl(\rightharpoonup, \, \triangleleft, \,
\vartheta, \, \triangleright, \, f \bigl)$ of bilinear maps
$$
\rightharpoonup \, \, : P \times V \to V, \quad \triangleleft : V
\times P \to V, \quad \vartheta \,\, : P \times P \to V, \quad
\triangleright \, : P \times V \to V, \quad f: P \times P \to V,
$$
satisfying axioms (H1)-(H7), (L1)-(L4) and (P1)-(P7). Two local
crossed systems $\bigl(\rightharpoonup, \, \triangleleft, \,
\vartheta, \, \triangleright, \, f \bigl)$ and
$\bigl(\rightharpoonup', \, \triangleleft', \, \vartheta', \,
\triangleright', \, f' \bigl)$ are \emph{local cohomologous} and
we denote this by $\bigl(\rightharpoonup, \, \triangleleft, \,
\vartheta, \, \triangleright, \, f \bigl) \, \approx_l \,
\bigl(\rightharpoonup', \, \triangleleft', \, \vartheta', \,
\triangleright', \, f' \bigl)$ if there exists a linear map $r: P
\to V$ satisfying the compatibility conditions
\equref{coc1}-\equref{coc5}. The answer in the general case to the
extension problem for Poisson algebras and the connection
\emph{local vs. global} is given below. In particular, we obtain
the formula for computing ${\mathcal G} {\mathcal P} {\mathcal
H}^{2} \, (P, \, V)$:

\begin{corollary}\colabel{formulamare}
Let $P$ be a Poisson algebra. Then:

$(1)$ If $V = (V, \cdot_V, [-, \, -]_V)$ is a fixed Poisson
algebra, then $\approx_l$ is an equivalence relation on the set
${\mathcal L} {\mathcal P} {\mathcal S} (P, (V, \cdot_V, [-, \,
-]_V))$ of all local crossed systems of $P$ by $(V, \cdot_V, [-,
\, -]_V)$. If we denote by ${\mathcal P} {\mathcal H}^{2} \, (P,
\, (V, \cdot_V, [-, \, -]_V))$ the quotient set ${\mathcal L}
{\mathcal P} {\mathcal S} (P, (V, \cdot_V, [-, \, -]_V))/
\approx_l$, then the map
$$
{\mathcal P} {\mathcal H}^{2} \, (P, \, (V, \cdot_V, [-, \, -]_V))
\to {\rm Ext} \, \bigl(P, (V, \cdot_V, [-, \, -]_V)\bigl), \,\,\, \overline{
(\rightharpoonup, \triangleleft, \vartheta, \triangleright,
f\bigl) }  \longmapsto \, P \, \sharp_{(\rightharpoonup, \,
\triangleleft, \vartheta, \triangleright, f\bigl)} \, V
$$
is a bijection.

$(2)$ Let $E$ be a vector space and $\pi : E \to P$ an
epimorphism of vector spaces with $V = {\rm Ker} (\pi)$. Then:
\begin{equation}\eqlabel{balsoi}
{\mathcal G} {\mathcal P} {\mathcal H}^{2} \, (P, \, V) \, = \,
\sqcup_{( \cdot_V, \{-, -\}_V )} \, {\mathcal P} {\mathcal H}^{2}
\, (P, \, (V, \cdot_V, \{-, -\}_V))
\end{equation}
where the coproduct in the right hand side is in the category of
sets over all possible Poisson algebra structures $(\cdot_V, \{-,
-\}_V )$ on the vector space $V$.
\end{corollary}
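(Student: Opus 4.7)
The plan is to derive \coref{formulamare} as a direct consequence of \thref{main1222}, \prref{hocechiv} and \leref{HHH}, by carefully tracking when the induced Poisson algebra structure on the kernel $V$ is held fixed and when it is allowed to vary.

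For part $(1)$, I would first observe that an element of ${\mathcal L} {\mathcal P} {\mathcal S} (P, (V, \cdot_V, [-, \, -]_V))$ is precisely a crossed system $\Omega(P, V) = \bigl(\rightharpoonup, \, \triangleleft, \, \vartheta, \, \cdot_V, \, \triangleright, \, f, [-, \, -]_V \bigl)$ in the sense of \thref{1} in which the last two entries have been prescribed to coincide with the fixed Poisson algebra structure on $V$. Indeed, axiom (P0) of \thref{1} is then automatic: $(V, \cdot_V, [-, \, -]_V)$ is a Poisson algebra by assumption, and the axioms (H0), (L0) from the Hochschild and Lie crossed system definitions reduce to this hypothesis, so that the remaining (H1)--(H7), (L1)--(L4), together with (P1)--(P7), are exactly the axioms imposed in the definition of ${\mathcal L} {\mathcal P} {\mathcal S} (P, (V, \cdot_V, [-, \, -]_V))$. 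Next I would verify that $\approx_l$ is an equivalence relation on this set: reflexivity is witnessed by $r = 0$, symmetry by replacing $r$ with $-r$ and swapping the two systems (the two sides share the same $\cdot_V$ and $[-, \, -]_V$, so \equref{coc1}--\equref{coc5} are invariant under this combined operation), and transitivity by taking the sum $r + r'$ of the two witnesses.

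The bijection in $(1)$ is then an immediate specialization of \thref{main1222}: by the last statement of \prref{hocechiv}, any extension of $P$ by the fixed Poisson algebra $(V, \cdot_V, [-, \, -]_V)$ is cohomologous to a crossed product extension, and the crossed system constructed in that proof uses $\cdot_V$ and $[-, \, -]_V$ coming from the Poisson subalgebra structure of $V$ inside $\mathfrak{E}$, which by assumption is the prescribed one, so the resulting crossed system is local. Conversely, \leref{HHH} shows that two crossed products built from local crossed systems are isomorphic via a map stabilizing $V$ and co-stabilizing $P$ if and only if conditions (M1) and (M5) hold trivially and (M2)--(M4), (M6)--(M7) translate verbatim into \equref{coc1}--\equref{coc5}, i.e.\ if and only if the two local crossed systems are $\approx_l$-related.

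For part $(2)$, the decisive observation is that the equivalence relation $\approx$ from \deref{echiaa} can only relate two crossed systems $\Omega(P, V)$ and $\Omega'(P, V)$ that induce the same Poisson algebra structure on $V$, since the very first requirement in \deref{echiaa} is $\cdot_V = \cdot_V'$ and $[-, \, -]_V = [-, \, -]_V'$. Consequently, ${\mathcal P} {\mathcal S} (P, V)$ decomposes as the disjoint union
\[
{\mathcal P} {\mathcal S} (P, V) \, = \, \sqcup_{( \cdot_V, \{-, -\}_V )} \, {\mathcal L} {\mathcal P} {\mathcal S} (P, (V, \cdot_V, \{-, -\}_V))
\]
over all Poisson algebra structures on $V$, and the equivalence relation $\approx$ respects this decomposition, restricting on each piece to exactly the relation $\approx_l$ described in part $(1)$. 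Passing to quotients yields \equref{balsoi}. The only genuine obstacle is the bookkeeping already pointed out: one must check carefully that the five-tuple description of ${\mathcal L} {\mathcal P} {\mathcal S}$ and the seven-tuple description of ${\mathcal P} {\mathcal S}$ are faithfully identified via the canonical embedding that appends the prescribed $\cdot_V$ and $[-, \, -]_V$, so that no axiom is lost or doubly imposed.
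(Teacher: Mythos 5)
Your argument is correct and follows the same route as the paper: part $(1)$ is the specialization of \thref{main1222} (via \prref{hocechiv} and \leref{HHH}) to crossed systems with prescribed $\cdot_V$ and $[-,\,-]_V$, and part $(2)$ rests on the observation that conditions (M1) and (M5) of \leref{HHH} force $\approx$ to preserve the induced Poisson structure on $V$, so that ${\mathcal P}{\mathcal S}(P,V)$ and its quotient split as the stated coproduct. The paper's own proof is just a terser version of exactly this reasoning.
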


\begin{proof}
The first part follows from the above considerations and
\thref{main1222} while the second part follows from relations (M1)
and (M5) of \leref{HHH}.
\end{proof}

\subsection*{The abelian case. Metabelian Poisson algebras.}
The formula \equref{balsoi} for computing ${\mathcal G} {\mathcal
P} {\mathcal H}^{2} \, (P, \, V)$ highlights the difficulty of the
GE-problem. In fact, even computing every cohomology object
${\mathcal P} {\mathcal H}^{2} \, (P, \, (V, \cdot_V, \{-,
-\}_V))$ from the right hand side, for a given Poisson algebra
$(V, \cdot_V, \{-, -\}_V)$ is a problem far from being trivial.
Traditionally, regardless if we consider the case of groups
\cite{R}, associative algebras \cite{Hoch2} or Lie algebras
\cite{CE}, the extension problem and therefore the cohomology
groups are considered only in the abelian case. In other words,
only one of the elements in the coproduct from the right hand side
of \equref{balsoi} is well understood, namely the one
corresponding to the abelian case -- which for Poisson algebras
comes down to $V$ being an abelian Poisson algebra, i.e. $x
\cdot_V y = [x, \, y]_V = 0$, for all $x$, $y \in V$. We recall
that we have denoted by $V_0$ the vector space $V$ with the
abelian Poisson algebra structure.

In what follows we will describe the object ${\mathcal P}
{\mathcal H}^{2} \, (P, \, V_0)$, for the abelian Poisson algebra
$V_0$. In this case the set ${\mathcal L} {\mathcal P} {\mathcal
S} (P, \, V_0)$ of local crossed systems of $P$ by $V_0$ consists
of the set of all $5$-uples $\bigl(\rightharpoonup, \,
\triangleleft, \, \vartheta, \, \triangleright, \, f \bigl)$ of
bilinear maps
$$
\rightharpoonup \, \, : P \times V \to V, \quad \triangleleft : V
\times P \to V, \quad \vartheta \,\, : P \times P \to V, \quad
\triangleright \, : P \times V \to V, \quad f: P \times P \to V,
$$
satisfying the following compatibilities for all $p$, $q$, $r\in
P$ and $x\in V$:
\begin{enumerate}
\item[(Ab1)] $(V, \, \rightharpoonup, \, \triangleleft, \,
\triangleright) \in {}_P^P\Mm_P$ is a Poisson bimodule over $P$

\item[(Ab2)] $\vartheta(p, \, q) \lhd r = \vartheta(p, \, qr) -
\vartheta(pq, \, r) + p \rightharpoonup \vartheta(q, \, r)$

\item[(Ab3)] $f(p, p) = 0$

\item[(Ab4)] $f(p, [q, \, r ] ) + f(q, [r, \, p ]) + f(r, [p, \,
q] ) + p\triangleright f(q, r) + q\triangleright f(r, p) + r
\triangleright f(p, q) = 0$

\item[(Ab5)] $f (pq, \, r) - f(p,\, r) \triangleleft q - \, p
\rightharpoonup f(q, \, r) = r \triangleright \vartheta(p, \, q) +
\vartheta ([p, \, r], \, q) + \vartheta (p, \, [q, \, r])$.
\end{enumerate}
These are the axioms that remain from (H1)-(H7), (L1)-(L4) and
(P1)-(P7) in the case that $\cdot_V = [-, \, -]_V = 0$. In this
context \deref{echiaa} takes the following simplified form: two
local crossed systems $\bigl(\rightharpoonup, \, \triangleleft, \,
\vartheta, \, \triangleright, \, f \bigl)$ and
$\bigl(\rightharpoonup', \, \triangleleft', \, \vartheta', \,
\triangleright', \, f' \bigl)$ of $P$ by $V_0$ are local
cohomologous and we denote this by $\bigl(\rightharpoonup, \,
\triangleleft, \, \vartheta, \, \triangleright, \, f \bigl) \,
\approx_{l, \, 0} \, \bigl(\rightharpoonup', \, \triangleleft', \,
\vartheta', \, \triangleright', \, f' \bigl)$ if and only if
$\rightharpoonup = \rightharpoonup'$, $\triangleleft =
\triangleleft'$, $\triangleright = \triangleright'$ and there
exists a linear map $r: P \to V$ such that
\begin{eqnarray}
\vartheta(p, \, q) &=& \vartheta '(p, \, q) + p \rightharpoonup
r(q) + r(p) \lhd  q - r(p q)  \eqlabel{cocab1}\\
f (p, \, q) &=& f' (p, \, q) + p \triangleright r(q) - q
\triangleright r(p) - r \bigl([p, \, q]\bigl) \eqlabel{cocab2}
\end{eqnarray}
for all $p$, $q\in P$. The equalities $\rightharpoonup \, = \,
\rightharpoonup'$, $\triangleleft = \triangleleft'$,
$\triangleright = \triangleright'$ show that the object $
{\mathcal P} {\mathcal H}^{2} \, (P, \, V_0)$ is also a coproduct
in the category of sets over all triples $(\rightharpoonup,
\triangleleft, \triangleright)$ such that $(V, \, \rightharpoonup,
\, \triangleleft, \, \triangleright) \in {}_P^P\Mm_P$ is a Poisson
bimodule. We will highlight this fact by fixing the Poisson module
structure $(V, \rightharpoonup, \triangleleft, \triangleright) \in
{}_P^P\Mm_P$ on $V$ and denoting by ${\mathcal
C}_{(\rightharpoonup, \triangleleft, \triangleright)} (P, \, V)$
the set of all pairs $(\vartheta, f)$ consisting of two bilinear
maps $\vartheta \, : P \times P \to V$ and $f: P \times P \to V$
satisfying the compatibility conditions (Ab2) - (Ab5). Two pairs
$(\vartheta, f)$ and $(\vartheta', f') \in {\mathcal
C}_{(\rightharpoonup, \triangleleft, \triangleright)} (P, \, V)$
are local cohomologous if there exists a linear map $r: P \to V$
satisfying the compatibility conditions
\equref{cocab1}-\equref{cocab2}. If we denote by ${\mathcal P}
{\mathcal H}^{2}_{(\rightharpoonup, \triangleleft,
\triangleright)} \, (P, \, V)$ the quotient set of ${\mathcal
C}_{(\rightharpoonup, \triangleleft, \triangleright)} (P, \, V)$
via this equivalence relation we obtain the following:

\begin{corollary}\colabel{cazuabspargere}
Let $P$ be a Poisson algebra and $V$ a vector space with the
abelian Poisson algebra structure $V_0$. Then:
\begin{equation}\eqlabel{balsoi2}
{\mathcal P} {\mathcal H}^{2} \, (P, \, V_0) \, = \,
\sqcup_{(\rightharpoonup, \triangleleft, \triangleright)} \,
{\mathcal P} {\mathcal H}^{2}_{(\rightharpoonup, \triangleleft,
\triangleright)} \, (P, \, V)
\end{equation}
where the coproduct in the right hand side is in the category of
sets over all possible Poisson bimodule structures
$(\rightharpoonup, \, \triangleleft, \, \triangleright)$ on $V$.
\end{corollary}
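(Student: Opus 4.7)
The plan is to extract the corollary directly from the explicit description of the equivalence relation $\approx_{l, 0}$ furnished in the paragraph immediately preceding the statement, together with axiom (Ab1). Two observations drive the argument.

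First, axiom (Ab1) says that in every local crossed system $(\rightharpoonup, \triangleleft, \vartheta, \triangleright, f)$ of $P$ by $V_0$, the triple $(\rightharpoonup, \triangleleft, \triangleright)$ defines a Poisson bimodule structure on $V$, while (Ab2)--(Ab5) are precisely the remaining constraints on the pair $(\vartheta, f)$. Hence the map
$$
\Phi : {\mathcal L} {\mathcal P} {\mathcal S}(P, V_0) \longrightarrow \{\text{Poisson bimodule structures on } V\}, \quad (\rightharpoonup, \triangleleft, \vartheta, \triangleright, f) \longmapsto (\rightharpoonup, \triangleleft, \triangleright)
$$
is well-defined, and its fibre over a fixed $(\rightharpoonup, \triangleleft, \triangleright)$ is, by definition, exactly ${\mathcal C}_{(\rightharpoonup, \triangleleft, \triangleright)}(P, V)$.

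Second, I would specialise \leref{HHH} to the abelian case $\cdot_V = [-, -]_V = 0$: then (M1) and (M5) become vacuous, (M2), (M3) and (M6) collapse to $\triangleleft = \triangleleft'$, $\rightharpoonup = \rightharpoonup'$ and $\triangleright = \triangleright'$ respectively, while (M4) and (M7) reduce exactly to \equref{cocab1}--\equref{cocab2}. This is precisely the content of $\approx_{l, 0}$ displayed above the statement. In particular, $\approx_{l, 0}$ never identifies elements lying in distinct fibres of $\Phi$, and its restriction to each fibre ${\mathcal C}_{(\rightharpoonup, \triangleleft, \triangleright)}(P, V)$ coincides with the equivalence relation used to build ${\mathcal P} {\mathcal H}^{2}_{(\rightharpoonup, \triangleleft, \triangleright)}(P, V)$.

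Once those two points are in place, the conclusion is purely formal: quotienting a set by an equivalence relation that respects a partition distributes over disjoint unions, giving
$$
{\mathcal P} {\mathcal H}^{2}(P, V_0) \,=\, {\mathcal L} {\mathcal P} {\mathcal S}(P, V_0)/\!\approx_{l, 0} \,=\, \sqcup_{(\rightharpoonup, \triangleleft, \triangleright)} {\mathcal C}_{(\rightharpoonup, \triangleleft, \triangleright)}(P, V)/\!\approx_{l, 0} \,=\, \sqcup_{(\rightharpoonup, \triangleleft, \triangleright)} {\mathcal P} {\mathcal H}^{2}_{(\rightharpoonup, \triangleleft, \triangleright)}(P, V),
$$
which is \equref{balsoi2}. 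There is no real obstacle: the substantive work has already been carried out in axioms (Ab1)--(Ab5) and in the simplified shape of $\approx_{l, 0}$ recorded above the statement. The only item requiring any care is the verification that (M1)--(M7) of \leref{HHH} really do collapse in the abelian case to the three equalities $\rightharpoonup = \rightharpoonup'$, $\triangleleft = \triangleleft'$, $\triangleright = \triangleright'$ together with \equref{cocab1}--\equref{cocab2}, since that is precisely what guarantees the partition is preserved by $\approx_{l, 0}$.
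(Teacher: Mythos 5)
Your argument is correct and coincides with the paper's own (largely implicit) proof: the paper derives the corollary from exactly the observation that $\approx_{l,0}$ forces $\rightharpoonup=\rightharpoonup'$, $\triangleleft=\triangleleft'$, $\triangleright=\triangleright'$, so the quotient splits as a coproduct over the Poisson bimodule structures, with each fibre quotient being ${\mathcal P}{\mathcal H}^{2}_{(\rightharpoonup,\triangleleft,\triangleright)}(P,V)$ by definition. Your explicit check that (M1)--(M7) of \leref{HHH} collapse in the abelian case to these equalities together with \equref{cocab1}--\equref{cocab2} is precisely the content the paper records just before the statement.
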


Now we shall take a step forward and we will consider $P$ as well
with the abelian Poisson algebra structure, i.e. $P = P_0$. Having
in mind the theory of groups\footnote{We recall that a group is
called metabelian if it is an extension of an abelian group by an
abelian group.} we introduce the following:

\begin{definition}\delabel{cotangent}
A Poisson algebra $Q$ is called \emph{metabelian} if $Q$ is an
extension of an abelian Poisson algebra by another abelian Poisson
algebra.
\end{definition}

Using \prref{hocechiv} and then \thref{1} we obtain that any
metabelian Poisson algebra $Q$ is isomorphic to a crossed product
$P_0 \, \sharp \, V_0$, for some vector spaces $P = P_0$ and $V =
V_0$, i.e. $P_0 \, \sharp \, V_0$ has the multiplication and the
bracket given for any $p$, $q\in P$, $x$, $y \in V$ by:
\begin{eqnarray}
(p, x) \star (q, y) &:=& \bigl(0, \,\, \vartheta (p, q) + p
\rightharpoonup y + x \triangleleft q \bigl)
\eqlabel{cotg1} \\
\{ (p, x), \, (q, y) \} &:=& \bigl( 0 , \,\, f(p, q) + p
\triangleright y - q \triangleright x  \bigl) \eqlabel{cotg2}
\end{eqnarray}
for some bilinear maps
$$
\rightharpoonup \, \, : P \times V \to V, \quad \triangleleft : V
\times P \to V, \quad \vartheta \,\, : P \times P \to V, \quad
\triangleright \, : P \times V \to V, \quad f: P \times P \to V
$$
satisfying the following compatibility conditions for any $p$,
$q$, $r\in P$ and $x\in V$:
\begin{eqnarray}
&& (p \rightharpoonup x) \lhd q = p \rightharpoonup (x \lhd q),
\quad \vartheta(p, \, q) \lhd r = p \rightharpoonup \vartheta(q,
\, r) \eqlabel{cotang1} \\
&& p \rightharpoonup (q \rightharpoonup x) = (x \lhd p) \lhd q = 0 \eqlabel{cotang2} \\
&& f (p, p) = 0, \quad p \triangleright (q \triangleright x) =
q\triangleright (p \triangleright x) \eqlabel{cotang3} \\
&&  p\triangleright f(q, r) + q\triangleright f(r, p) + r
\triangleright f(p, q) = 0 \eqlabel{cotang4} \\
&& r \triangleright \vartheta(p, \, q) + f(p,\, r) \triangleleft q
+ \, p \rightharpoonup f(q, \, r) = 0 \eqlabel{cotang5} \\
&& p \rightharpoonup (q \triangleright x) = - \, (p \triangleright
x) \triangleleft q = q \triangleright (p \rightharpoonup x) \eqlabel{cotang6} \\
&& (q \triangleright x) \triangleleft p = q \triangleright (x
\triangleleft p) \eqlabel{cotang7}
\end{eqnarray}
which are the ones remaining from the axioms (P0)-(P7) in this
context. For two vector spaces $P$ and $V$ we denote by ${\mathcal
M} {\mathcal A} \, (P, \, V)$ the set of all \emph{metabelian
systems of $P$ by $V$}, that is all bilinear maps
$(\rightharpoonup, \, \triangleleft, \, \vartheta, \,
\triangleright, \, f)$ satisfying the compatibility conditions
\equref{cotang1}-\equref{cotang7}. With these notations we obtain
the following result that classifies all metabelian Poisson
algebras that are extensions of $P_0$ by $V_0$:

\begin{corollary}\colabel{Poissmeta}
Let $P$ and $V$ be two vector spaces with the abelian Poisson
algebra structure. Then:
\begin{equation}\eqlabel{balsoi5}
{\mathcal P} {\mathcal H}^{2} \, (P_0, \, V_0) \cong {\mathcal M}
{\mathcal A} \, (P, \, V)/\approx^{a}_{m}
\end{equation}
where $\approx^{a}_{m}$ is the following relation on ${\mathcal M}
{\mathcal A} \, (P, \, V)$: $(\rightharpoonup, \, \triangleleft,
\, \vartheta, \, \triangleright, \, f) \approx^{a}_{m}
(\rightharpoonup', \, \triangleleft', \, \vartheta', \,
\triangleright', \, f')$ if and only if $\rightharpoonup =
\rightharpoonup'$, $\triangleleft = \triangleleft'$,
$\triangleright = \triangleright'$ and there exists a linear map
$r: P \to V$ such that for any $p$, $q\in P$ we have:
\begin{eqnarray}
\vartheta(p, \, q) = \vartheta '(p, \, q) + p \rightharpoonup r(q)
+ r(p) \lhd  q, \quad f (p, \, q) = f' (p, \, q) + p
\triangleright r(q) - q \triangleright r(p). \eqlabel{metne1}
\end{eqnarray}
\end{corollary}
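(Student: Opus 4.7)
The plan is to derive this corollary as a direct specialization of \coref{formulamare}(1) to the case where $P = P_0$ and $V = V_0$, i.e.\ where both the multiplications and both the Lie brackets on $P$ and $V$ are the trivial maps. So I start from the identification ${\mathcal P} {\mathcal H}^{2} \, (P_0, \, V_0) = {\mathcal L} {\mathcal P} {\mathcal S} (P_0, V_0)/\approx_l$ given by \coref{formulamare}(1) and then reduce the two pieces of data (the axioms defining ${\mathcal L} {\mathcal P} {\mathcal S}$ and the equivalence relation $\approx_l$) to the form appearing in the statement.

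First I would substitute $pq = 0$, $[p,q] = 0$, $x \cdot_V y = 0$ and $[x,y]_V = 0$ into each of the axioms (H1)--(H7), (L1)--(L4) and (P1)--(P7) defining ${\mathcal L} {\mathcal P} {\mathcal S} (P_0, V_0)$. A routine check shows that (H1), (H2), (H3), (L2), (P4), (P6) and (P7) all become $0 = 0$ and hence disappear; the axioms (H4), (H5), (H6), (H7), (L1), (L3), (L4) and (P1), (P5) directly become \equref{cotang1}--\equref{cotang5} and \equref{cotang7}; while the pair (P2) and (P3) combine to produce the double equality in \equref{cotang6}. This identifies ${\mathcal L} {\mathcal P} {\mathcal S} (P_0, V_0)$ with ${\mathcal M} {\mathcal A} \, (P, \, V)$ as sets.

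Next I would reduce the equivalence relation. Writing out the conditions \equref{coc1}--\equref{coc5} from \deref{echiaa} under the assumption $\cdot_V = 0$ and $[-,-]_V = 0$, the terms $r(p) \cdot_V x$, $x \cdot_V r(p)$, $r(p) \cdot_V r(q)$, $[r(p), x]_V$ and $[r(p), r(q)]_V$ all vanish, and the terms $r(pq)$ and $r([p,q])$ also vanish since the operations on $P$ are trivial. What remains of \equref{coc1}, \equref{coc2}, \equref{coc4} is exactly $\rightharpoonup = \rightharpoonup'$, $\triangleleft = \triangleleft'$, $\triangleright = \triangleright'$, while \equref{coc3} and \equref{coc5} collapse to the two equalities in \equref{metne1}. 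Thus $\approx_l$ restricted to ${\mathcal L} {\mathcal P} {\mathcal S} (P_0, V_0)$ coincides with the relation $\approx^{a}_{m}$ on ${\mathcal M} {\mathcal A} \, (P, \, V)$.

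Finally, I would invoke \coref{formulamare}(1) with $(V, \cdot_V, [-,-]_V) = V_0$ to conclude that the bijection of sets induced by the identity on representatives descends to a bijection ${\mathcal P} {\mathcal H}^{2} \, (P_0, \, V_0) \cong {\mathcal M} {\mathcal A} \, (P, \, V)/\approx^{a}_{m}$. There is no real obstacle here beyond careful bookkeeping; the only point that benefits from a little care is checking that (P2) and (P3) together, not separately, are needed to recover the full double-equality \equref{cotang6}, and that the vanishing of the $r(pq)$ and $r([p,q])$ contributions in \equref{coc3} and \equref{coc5} is what allows the cohomology relation to simplify to the cleaner form \equref{metne1}.
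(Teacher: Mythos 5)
Your proposal is correct and follows essentially the same route as the paper: the paper also obtains the result by specializing the general crossed-system machinery (axioms (H1)--(H7), (L1)--(L4), (P1)--(P7) reducing to \equref{cotang1}--\equref{cotang7}, and the relation of \deref{echiaa} reducing to $\approx^{a}_{m}$) to the abelian case $P=P_0$, $V=V_0$; your more detailed bookkeeping of which axioms vanish and which collapse to the metabelian-system conditions is accurate, including the observation that (P2) and (P3) together yield the double equality \equref{cotang6}.
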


\begin{proof}
Follows from the above considerations once we observe that the
equivalence relation from \deref{echiaa} takes the form given in
the statement for abelian Poisson algebras $P_0$ and $V_0$.
\end{proof}

\begin{remark}
The way we defined the equivalence relation $\approx^{a}_{m}$ in
\coref{Poissmeta} indicates the decomposition of ${\mathcal P}
{\mathcal H}^{2} \, (P_0, \, V_0)$ into a coproduct over all
triples $(\rightharpoonup, \, \triangleleft, \, \triangleright)$
such that $(V, \, \rightharpoonup, \, \triangleleft, \,
\triangleright)$ is a Poisson bimodule over the abelian Poisson
algebra $P_0$. The decomposition is just the special case of the
one given in \coref{cazuabspargere} by taking $P = P_0$.
\end{remark}

The classification of all metabelian Poisson algebras of a given
dimension seems to be a very challenging problem which will be
addressed elsewhere. However we also include here two examples:
the first one classifies all crossed products $k_0 \# k_0$ by
computing the object ${\mathcal P} {\mathcal H}^{2} \, (k_0, \,
k_0)$; in particular, we will classify all $2$-dimensional
metabelian Poisson algebras.

\begin{example}\exlabel{cot2dim}
Let $P = V := k_0$ be the abelian Poisson algebra of dimension
$1$. Then ${\mathcal P} {\mathcal H}^{2} \, (k_0, \, k_0) \cong \{
(a, \, b) \in k^2 \, | \, ab = 0 \}$. The explicit bijection
between $\{ (a, \, b) \in k^2 \, | \, ab = 0 \}$ and the set of
all equivalence classes of all non-cohomologous extensions of
$k_0$ by $k_0$ is given by: $(a, \, b) \mapsto k^{2}_{a, \, b} $,
where $k^{2}_{a, \, b} $ is the Poisson algebra with the basis
$\{e_1, \, e_2\}$ and the structure: $e_1 \star e_1 := a \, e_2$
and $[e_1, e_2] := b\, e_2$, for all $(a, \, b) \in \{ (a, \, b)
\in k^2 \, | \, ab = 0 \}$.

In particular, up to an isomorphism of Poisson algebras, there
exist three $2$-dimensional metabelian Poisson algebras: $k^2_0$,
$k^2_{(1, 0)}$ and $k^2_{(0, 1)}$.

Indeed, in the first step we can easily see that there exists a
bijection ${\mathcal M} {\mathcal A} \, (k_0, \, k_0) \cong \{ (a,
b) \in k^2 \, | \, ab = 0 \}$ given such that the metabelian
system $(\rightharpoonup, \, \triangleleft, \, \vartheta, \,
\triangleright, \, f)$ associated to the pair $(a, b)$ with $ab =
0$ is given by:
$$
\rightharpoonup \, := 0, \,\,\, \triangleleft := 0, \,\,\, f:= 0,
\,\,\, \vartheta (1, 1) := a, \,\,\, 1\triangleright 1 := b.
$$
Then, we observe that the equivalence relation of
\coref{Poissmeta} written equivalently on $\{ (a, \, b) \in k^2 \,
| \, ab = 0 \}$ becomes $(a, \, b) \, \approx^{a}_{m} \, (a', \,
b')$ if and only if $a = a'$ and $b = b'$. Thus, ${\mathcal P}
{\mathcal H}^{2} \, (k_0, \, k_0) = {\mathcal M} {\mathcal A} \,
(k_0, \, k_0)$. The Poisson algebra $k^{2}_{a, \, b}$ is just the
crossed product $k_0 \# k_0$ with the structures given by
\equref{cotg1}-\equref{cotg2} associated to above metabelian
system. We have considered the canonical basis in $k\times k$.
\end{example}

Let $n$ be a positive integer. We describe all crossed products
$k_0 \# \, k^n_0$ (these are $(n+1)$-dimensional metabelian
Poisson algebras) and then we shall classify all extension of
$k_0$ by $k^n_0$ by computing ${\mathcal P} {\mathcal H}^{2} \,
(k_0, \, k^n_0)$. The tool for both questions is an interesting
set of matrices defined as follows: Let $\mathfrak{C} (n)$ be the
set of all $4$-uples $(A, \, B, \, C, \, \theta_0) \in {\rm M}_n
(k) \times {\rm M}_n (k) \times {\rm M}_n (k) \times k^n$
satisfying the following compatibilities:
\begin{equation}\eqlabel{meta13}
AB = BA, \,\, A C = CA = - BC = - CB, \,\, A^2 = B^2 = 0, \,\, A
\theta_0 = B \theta_0, \,\, C \theta_0 = 0.
\end{equation}

Two $4$-uples $(A, \, B, \, C, \, \theta_0)$ and $(A', \, B', \,
C', \, \theta'_0)$ are cohomologous and we denote this by $(A, \,
B, \, C, \, \theta_0) \approx^{a}_{m} (A', \, B', \, C', \,
\theta'_0)$ if and only if $A = A'$, $B = B'$, $C = C'$ and there
exists $r \in k^n$ such that $\theta_0 - \theta'_0 = (A + B) r $.
With these notations we have:

\begin{example}\exlabel{cot3dima}
Any crossed product $k_0 \# \, k^n_0$ is isomorphic to the Poisson
algebra denoted by $k^{n+1}_{(A, \, B, \, C, \, \theta_0)}$ which
is the vector space with basis $\{E_1, \, E_2, \cdots,  E_{n+1}
\}$, multiplication $\star$ and bracket given by:
\begin{eqnarray}
E_i \star E_{n+1} &:=& \sum_{j=1}^n \, b_{ji} \, E_j, \quad E_{n+1}
\star E_i := \sum_{j=1}^n \, a_{ji} \, E_j \\
E_{n+1} \star E_{n+1} &:=& \sum_{j=1}^n \, \theta_{0j} \, E_j, \quad
\{E_{n+1}, \, E_i  \} := \sum_{j=1}^n \, c_{ji} \, E_j
\end{eqnarray}
for all $i = 1, \cdots, n$ and $(A, \, B, \, C, \, \theta_0) \in
\mathfrak{C} (n)$ - all undefined operations are zero and $A =
(a_{ij})$, $B = (b_{ij})$, $C = (c_{ij})$ and $ \theta_0 =
(\theta_{0j})$. Furthermore,
$$
{\mathcal P} {\mathcal H}^{2} \, (k_0, \, k^n_0) \cong
\mathfrak{C} (n)/ \approx^{a}_{m}.
$$

Indeed, by a straightforward computation we can show that there
exists a bijection between the set ${\mathcal M} {\mathcal A} \,
(k_0, \, k^n_0)$ of all metabelian systems of $k_0$ by $k^n_0$ and
the set of all $4$-tuples $(\lambda, \Lambda, \gamma, \theta_0)$
consisting of three linear maps $\lambda$, $\Lambda$, $\gamma :
k^n \to k^n$ and a vector $\theta_0 \in k^n$ satisfying the
following compatibility conditions:
\begin{eqnarray}
&& \Lambda \circ \lambda = \lambda \circ \Lambda, \quad \Lambda
(\theta_0) = \lambda (\theta_0), \quad \lambda^2 = \Lambda^2 = 0
\eqlabel{compos1} \\
&&\gamma (\theta_0) = 0, \quad \lambda \circ \gamma = \gamma \circ
\lambda = - \Lambda \circ \gamma = - \gamma \circ \Lambda.
\eqlabel{compos2}
\end{eqnarray}
The bijection is given such that the metabelian system
$(\rightharpoonup, \, \triangleleft, \, \vartheta, \,
\triangleright, \, f)$ corresponding to $(\lambda, \Lambda,
\gamma, \theta_0)$ is given by:
$$
a \rightharpoonup x := a \, \lambda (x), \,\,\, x \triangleleft a
:= a \, \Lambda (x), \,\,\, \vartheta (a, \, b):= ab \, \theta_0,
\,\,\, a \triangleright x := a \, \gamma (x), \,\,\, f(a, b):= 0
$$
for all $a$, $b\in k$ and $x\in k^n$. We denote by $\{e_1, \cdots,
e_n \}$ the canonical basis of $k^n$ and let $A$, $B$, $C$ be the
matrices associated to $\lambda$, $\Lambda$ and respectively
$\gamma$ with respect to this basis. If we take as a basis in $k
\times k^n$ the vectors $E_1 = (0, e_1), \cdots$, $E_n = (0, e_n)$
and $E_{n+1} = (1, 0)$ then the Poisson algebra $k^{n+1}_{(A, \,
B, \, C, \, \theta_0)}$ is just the crossed product $k_0 \# k^n_0$
with the structures given by \equref{cotg1}-\equref{cotg2}
associated to the above metabelian system written equivalently
with matrices. Finally, the equivalence relation of
\coref{Poissmeta} rephrased using the elements of $\mathfrak{C}
(n)$ is precisely the one given before \exref{cot3dima}.
\end{example}

Similar computations to the ones performed in \exref{cot3dima}
lead to the description and classification of all crossed products
$k^n_0 \, \# \, k_0$:

\begin{example}\exlabel{cot3dimb}
Any crossed product $k^n_0 \, \# \, k_0$ is isomorphic to one of
the two families of Poisson algebras $k^{n+1}_{(\theta, \, f)}$ or
$k^{n+1}_{(\gamma, \, f)}$ described below.

\begin{eqnarray}
&k^{n+1}_{(\theta, \, f)}:& \qquad
E_i \star E_j : = \theta (e_i, \, e_j) \, E_{n+1} \qquad
\{E_i, \, E_j \} := f(e_i, \, e_j) \, E_{n+1} \eqlabel{adouaneb1}
\end{eqnarray}
for all bilinear maps $\theta: k^n \times k^n \to k$, $f: k^n \times k^n \to k$
satisfying the condition $f (x, x) = 0$, for all $x\in k^n$. We denote by ${\mathcal M} {\mathcal A}_1 \, (k^n, \, k)$
the set of all such pairs $(\theta, \, f)$.

\begin{eqnarray}
&k^{n+1}_{(\gamma, \, f)}:& \qquad
\{E_i, \, E_j \} := f(e_i, \, e_j) \, E_{n+1}, \qquad \{E_i, \, E_{n+1} \} :=  \gamma (e_i) \, E_{n+1}
\eqlabel{adouaneb2}
\end{eqnarray}
for all pairs $(\gamma, \, f)$ consisting of a non-trivial linear map
$\gamma: k^n \to k$ and a bilinear map $f: k^n \times k^n \to k$
satisfying the following two compatibility conditions:
$$
f(x, x) = 0, \qquad \gamma (x) \, f(y, \, z) + \gamma (y) \, f(z, \, x) + \gamma (z) \, f (x, \, y) = 0
$$
for all $x$, $y$, $z\in k^n$. We denote by ${\mathcal M} {\mathcal A}_2 \, (k^n, \, k)$
the set of all such pairs $(\gamma, \, f)$. Furthermore, we have that
$$
{\mathcal P} {\mathcal H}^{2} \, (k^n_0, \, k_0) \cong {\mathcal
M} {\mathcal A}_1 \, (k^n, \, k) \, \sqcup \, {\mathcal M}
{\mathcal A}_2 \, (k^n, \, k).
$$

The details are left to the reader. We just mention that the set
${\mathcal M} {\mathcal A} \, (k^n, \, k)$ of all metabelian
systems of $k^n_0$ by $k_0$ is parameterized by the set of all
triples $(\theta, \, \gamma, \, f)$ consisting of two bilinear
maps $\theta: k^n \times k^n \to k$, $f: k^n \times k^n \to k$ and
a linear map $\gamma: k^n \to k$ satisfying the following
compatibility conditions:
\begin{equation}\eqlabel{splitaremeta}
f(x, x) = 0, \quad \gamma (x) \, f(y, \, z) + \gamma (y) \, f(z, \, x) + \gamma (z) \, f (x, \, y) = 0, \quad
\theta (x, \, y) \gamma (z) = 0
\end{equation}
for all $x$, $y$, $z\in k^n$. The bijection is given such that the metabelian system
$(\rightharpoonup, \, \triangleleft, \, \vartheta, \,
\triangleright, \, f)$ corresponding to $(\theta, \, \gamma, \, f)$ is given by:
$$
\rightharpoonup  := 0, \,\,\,\, \triangleleft
:= 0,  \,\,\,\, \vartheta (x, \, y):=  \theta (x, \, y),
\,\,\, x \triangleright a := a \, \gamma (x), \,\,\, f(x, y):= f(x, \, y)
$$
for all $a\in k$ and $x$, $y\in k^n$. The last equation of
\equref{splitaremeta} leads us to consider two cases (depending on
whether $\lambda$ is the trivial map or not) and to write the set
${\mathcal M} {\mathcal A} \, (k^n, \, k)$ as a coproduct of
${\mathcal M} {\mathcal A}_1 \, (k^n, \, k)$ and ${\mathcal M}
{\mathcal A}_2 \, (k^n, \, k)$. The Poisson algebras
$k^{n+1}_{(\theta, \, f)}$ and $k^{n+1}_{(\gamma, \, f)}$ are just
the corresponding crossed products $k^n_0 \, \# \, k_0$. The
equivalence relation written on each of the sets ${\mathcal M}
{\mathcal A}_i \, (k^n, \, k)$ becomes an equality and this proves
our last assertion.
\end{example}

The description of all $3$-dimensional metabelian Poisson algebras
follows just by taking $n = 2$ in \exref{cot3dima} and
\exref{cot3dimb}:

\begin{corollary}
Any $3$-dimensional metabelian Poisson algebra is isomorphic to
one of the following Poisson algebras: $k^{3}_{(A, \, B, \, C, \,
\theta_0)}$, for all $(A, \, B, \, C, \, \theta_0) \in
\mathfrak{C} (2)$,  or $k^{3}_{(\theta, \, f)}$, for all $(\theta,
\, f) \in {\mathcal M} {\mathcal A}_1 \, (k^2, \, k)$, or
$k^{3}_{(\gamma, \, f)}$, for all $(\gamma, \, f)\in {\mathcal M}
{\mathcal A}_2 \, (k^2, \, k)$.
\end{corollary}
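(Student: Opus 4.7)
The plan is to reduce the statement to a direct application of Example 2.10 and Example 2.11 in the case $n = 2$. Let $Q$ be a $3$-dimensional metabelian Poisson algebra. By \deref{cotangent}, there exists an exact sequence of Poisson algebras $0 \to V_0 \to Q \to P_0 \to 0$ with both $V_0$ and $P_0$ abelian. Since $\dim V + \dim P = 3$, the non-degenerate possibilities are $(\dim P, \dim V) \in \{(1,2),(2,1)\}$; the trivial cases where $V = 0$ or $P = 0$ (so that $Q$ itself is the $3$-dimensional abelian Poisson algebra) are easily seen to correspond to the zero data inside the families listed in the statement.

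First I would treat the case $\dim P = 1$, $\dim V = 2$. Then $P \cong k_0$ and $V \cong k_0^2$. By \prref{hocechiv}, up to an isomorphism of Poisson algebras that stabilizes $V$ and co-stabilizes $P$, the algebra $Q$ is a crossed product $k_0 \, \sharp \, k_0^2$. Since in this case the Poisson structures on $P$ and $V$ are the abelian ones, any such crossed product is in fact described by a metabelian system. Hence applying \exref{cot3dima} with $n = 2$ produces an isomorphism $Q \cong k^3_{(A, B, C, \theta_0)}$ for some $(A, B, C, \theta_0) \in \mathfrak{C}(2)$.

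Next I would handle the case $\dim P = 2$, $\dim V = 1$, so that $P \cong k_0^2$ and $V \cong k_0$. The same application of \prref{hocechiv} gives $Q \cong k_0^2 \, \sharp \, k_0$ (again necessarily a metabelian crossed product), and then \exref{cot3dimb} with $n = 2$ exhibits $Q$ as isomorphic either to $k^3_{(\theta, f)}$ for some $(\theta, f) \in {\mathcal M} {\mathcal A}_1 \, (k^2, k)$ or to $k^3_{(\gamma, f)}$ for some $(\gamma, f) \in {\mathcal M} {\mathcal A}_2 \, (k^2, k)$, according to whether the action of $P$ on $V$ is trivial or not.

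There is essentially no obstacle: the combinatorial and cohomological content has already been absorbed in the construction of the sets $\mathfrak{C}(2)$, ${\mathcal M} {\mathcal A}_1 \, (k^2, k)$ and ${\mathcal M} {\mathcal A}_2 \, (k^2, k)$ in the preceding two examples. The only point worth emphasizing is that the statement asserts a covering by three families rather than a classification up to isomorphism of Poisson algebras — equivalent algebras may well appear in more than one family (for instance the abelian $3$-dimensional algebra itself), so we are not required to verify any non-redundancy.
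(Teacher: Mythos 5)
Your proposal is correct and follows essentially the same route as the paper, which simply takes $n=2$ in the two examples describing the crossed products $k_0 \, \# \, k^n_0$ and $k^n_0 \, \# \, k_0$; your additional remarks on the degenerate splittings of $3 = \dim P + \dim V$ and on the covering (rather than non-redundant) nature of the statement are consistent with the paper's intent.
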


\section{Co-flag Poisson Algebras} \selabel{coflag}

\coref{formulamare} shows that computing the global cohomological
object ${\mathcal G} {\mathcal P} {\mathcal H}^{2} \, (P, \, V)$
is a highly non-trivial task. In this section we deal with a
special class of Poisson algebras for which we provide a recursive
method  to compute this classification object. These are the
so-called co-flag Poisson algebras as defined below:

\begin{definition} \delabel{coflg}
Let $P$ be a Poisson algebra and $E$ a vector space. A Poisson
algebra structure $(\cdot_E, [- , \, -]_E)$ on $E$ is called a
\emph{co-flag Poisson algebra over $P$} if there exists a positive
integer $n$ and a finite chain of epimorphisms of Poisson algebras
\begin{equation} \eqlabel{lant}
P_n : = (E, \cdot_E, [- , \, -]_E)
\stackrel{\pi_{n}}{\longrightarrow} P_{n-1}
\stackrel{\pi_{n-1}}{\longrightarrow} P_{n-2} \, \cdots \,
\stackrel{\pi_{2}}{\longrightarrow} P_1 \stackrel{\pi_{1}}
{\longrightarrow} P_{0} := P {\longrightarrow} 0
\end{equation}
such that ${\rm dim}_k ( {\rm Ker} (\pi_{i}) ) = 1$, for all $i =
1, \cdots, n$. A finite dimensional Poisson algebra is called a
\emph{co-flag Poisson algebra} if it is a co-flag Poisson algebra
over $\{0\}$.
\end{definition}

The recursive method we introduce in this section relies on the
first step, namely $n=1$. Therefore we will start by describing
and classifying all co-flag Poisson algebra structures over $P$ of
dimension $1 + {\rm dim}_k (P)$ or, equivalently, all crossed
products between $P$ and a vector space $V_{1}$ of dimension $1$.
This process can be iterated by replacing the initial Poisson
algebra $P$ by such a crossed product between $P$ and $V_{1}$.
Before proving the main theoretical result which allows us to
develop this iteration process we need to introduce a few pieces
of terminology:

\begin{definition} \delabel{coflagab}
Let $P$ be a Poisson algebra. An \emph{abelian co-flag datum of
$P$} is a $5$-tuple $(\lambda, \, \Lambda, \, \theta, \, \gamma,
\, f)$, where $\lambda$, $\Lambda$, $\gamma: P \to k$ are linear
maps, $\theta$, $f : P\times P \to k$ are bilinear maps satisfying
the following compatibilities for any $p$, $q$, $r\in P$:
\begin{enumerate}
\item[(AF1)] $\lambda$ and $\Lambda: P \to k$ are morphisms of
associative algebras

\item[(AF2)] $\theta (p, \, qr) - \theta (pq, \, r) = \theta (p,
\, q) \Lambda(r) - \theta (q, \, r) \lambda(p)$

\item[(AF3)] $f (p, p) = \gamma ([p, \, q]) = 0$

\item[(AF4)] $f(p, [q, \, r ] ) + f(q, [r, \, p ]) + f(r, [p, \,
q] ) + \gamma (p) f(q, r) + \gamma (q) f(r, p) + \gamma (r) f(p,
q) = 0$

\item[(AF5)] $f(pq, \, r) - \Lambda(q) f(p, \, r) - \lambda(p)
f(q, \,r) = \gamma (r) \theta (p, \, q) + \theta ([p, \, r], \, q)
+ \theta (p, \, [q, \, r])$

\item[(AF6)] $\gamma (pq) = \gamma (p) \Lambda(q) + \lambda(p)
\gamma(q) $

\item[(AF7)] $ \lambda( [p, \, q]) = \Lambda ([p, \, q]) = 0.$
\end{enumerate}
We denote by ${\mathcal A} {\mathcal F} \, (P)$ the set of all
abelian co-flag data of $P$.
\end{definition}

\begin{remark}
\deref{coflagab} simplifies considerably in the case of perfect
Poisson algebras. Indeed, let $P$ be a perfect Poisson algebra,
i.e. $P$ is generated as a vector space by all brackets $[x,y]$
with $x$, $y \in P$. Then, it follows from the compatibilities
(AF3) and (AF7) that $\lambda = \Lambda = \gamma \equiv 0$. Thus,
an abelian co-flag datum reduces to a pair of bilinear maps
$(\theta, \, f)$ satisfying the following compatibilities for all
$p$, $q$, $r \in P$:
\begin{eqnarray*}
&&f(p,  p) = 0, \quad \theta(p, qr) = \theta(pq, r)\\
&&f\bigl(p, [q, r]\bigl) + f\bigl(q, [r, p]\bigl) + f\bigl(r, [p, q]\bigl) = 0\\
&&f(pq, r) = \theta([p, r], q) + \theta(p, [q, r]).
\end{eqnarray*}
\end{remark}

\begin{definition} \delabel{coflagneab}
Let $P$ be a Poisson algebra. A \emph{non-abelian co-flag datum of
$P$} is a triple $(\lambda, \, \theta, \, u)$, where $u \in k
\setminus \{0\}$, $\lambda: P \to k$ is a linear map, $\theta :
P\times P \to k$ is a bilinear map satisfying the following two
compatibilities for any $p$, $q$, $r\in P$:
\begin{enumerate}
\item[(NF1)] $\lambda (pq) = \lambda(p) \lambda(q) - u \, \theta
(p, \, q)$

\item[(NF2)] $\theta (p, \, qr) - \theta (pq, \, r) = \theta (p,
\, q) \lambda(r) - \theta (q, \, r) \lambda(p).$
\end{enumerate}
\end{definition}

We denote by ${\mathcal N} {\mathcal F} \, (P)$ the set of all
non-abelian co-flag data of $P$ and by ${\mathcal F} \, (P) :=
{\mathcal A} {\mathcal F} \, (P) \,  \sqcup \, {\mathcal N}
{\mathcal F} \, (P)$ the disjoint union of the two sets. The
elements of ${\mathcal F} \, (P)$ will be called \emph{co-flag
data} of $P$. In light of the following result co-flag data are
the key tools for describing co-flag Poisson algebras:

\begin{proposition}\prlabel{coflagdim1}
Let $P$ be a Poisson algebra and $V$ a vector space of dimension
$1$ with a basis $\{x\}$. Then there exists a bijection between
the set ${\mathcal P} {\mathcal S} \, (P, \, V)$ of all crossed
systems of $P$ by $V$ and the set ${\mathcal F} \, (P) = {\mathcal
A} {\mathcal F} \, (P) \, \sqcup \, {\mathcal N} {\mathcal F} \,
(P)$ of all co-flag data of $P$.

The bijection is such that the crossed system $\Omega(P, V) =
\bigl(\rightharpoonup, \, \triangleleft, \, \vartheta, \, \cdot_V,
\, \triangleright, \, f, [-, -]_V \bigl)$ corresponding to
$(\lambda, \, \Lambda, \, \theta, \, \gamma, \, f) \in {\mathcal
A} {\mathcal F} \, (P)$ is given by:
\begin{eqnarray}
p \rightharpoonup x &=& \lambda (p) x, \quad \,\,\,\,\,\,\,\,
x \triangleleft p = \Lambda(p) x, \quad \vartheta (p, q) = \theta (p, q) x \eqlabel{extenddim1.1a} \\
x \cdot_V x  &=& \left[x, \, x\right]_V = 0, \qquad p
\triangleright x = \gamma (p) x, \qquad \,\,\,\,\, f(p, q) = f(p,
q) x \eqlabel{extenddim1.1b}
\end{eqnarray}
while the crossed system $\Omega(P, V)  = \bigl(\rightharpoonup,
\, \triangleleft, \, \vartheta, \, \cdot_V, \, \triangleright \,
f, [-, -]_V \bigl)$ corresponding to $(\lambda, \, \theta, \, u)
\in {\mathcal N} {\mathcal F} \, (P) $ is given by:
\begin{eqnarray}
p \rightharpoonup x &=& \lambda (p) x, \quad \,\,\,\,\,\,\,\,
x \triangleleft p = \lambda(p) x, \quad \vartheta (p, q) = \theta (p, q) x \eqlabel{extenddim1.2a}\\
x \cdot_V x  &=& u \, x , \quad \,\, p \triangleright x = 0, \quad
\,\,\, f(p, q) = - u^{-1} \, \lambda ( [p, \, q] ) \, x, \quad
\left[x, \, x\right]_V = 0 \eqlabel{extenddim1.2b}
\end{eqnarray}
for all $p$, $q \in P$.
\end{proposition}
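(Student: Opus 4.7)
The plan is to encode each of the seven structure maps of a crossed system $\Omega(P,V)$ as scalars or scalar-valued maps on $P$, and then translate the axioms (P0)--(P7) of \thref{1} into the defining relations of \deref{coflagab} or \deref{coflagneab}. Since $V = kx$ is one-dimensional, every bilinear map with codomain $V$ is determined by its unique scalar coefficient in the basis $\{x\}$. Hence there exist uniquely determined linear maps $\lambda, \Lambda, \gamma : P \to k$, bilinear maps $\theta, f : P\times P \to k$, and a scalar $u \in k$ such that
\begin{equation*}
p\rightharpoonup x = \lambda(p)\,x,\quad x\triangleleft p = \Lambda(p)\,x,\quad \vartheta(p,q) = \theta(p,q)\,x,
\end{equation*}
\begin{equation*}
x\cdot_V x = u\,x,\quad p\triangleright x = \gamma(p)\,x,\quad f(p,q) = f(p,q)\,x,
\end{equation*}
while $[x,x]_V = 0$ is forced by the antisymmetry of any Lie bracket, and $(V,\cdot_V)$ is automatically associative because it is one-dimensional.

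Next I would rewrite axioms (P0)--(P7) one by one in terms of the data $(\lambda,\Lambda,\theta,u,\gamma,f)$. The Hochschild axioms (H1)--(H4) together with (H6)--(H7) translate, after cancellation by $x$, into: (a) $(u\Lambda(p) - u\lambda(p))\,x = 0$ coming from (H2)--(H3), forcing $u(\Lambda-\lambda) = 0$; (b) the $2$-cocycle relation (H5) which gives the second equation of (NF2)/(AF2); (c) the identity $\lambda(pq) = \lambda(p)\lambda(q) - u\,\theta(p,q)$ and its right-handed analogue $\Lambda(pq) = \Lambda(p)\Lambda(q) - u\,\theta(p,q)$, coming from (H6)--(H7). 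The Lie crossed axioms (L1)--(L4) yield $f(p,p) = 0$, the Jacobi-cocycle identity (AF4), and the module compatibility (L3) which becomes $\lambda([p,q]) = -u\,f(p,q)$ after using (P3) (see next step). Finally (P1)--(P7) of \thref{1} produce the mixed compatibilities; in particular (P4) yields $u\gamma(p) = 0$ and (P5) gives its symmetric counterpart, while (P7) supplies the Leibniz-type rule $\gamma(pq) = \gamma(p)\Lambda(q) + \lambda(p)\gamma(q)$.

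At this point I would split the analysis along the dichotomy $u=0$ versus $u\neq 0$. If $u = 0$, no further constraint is imposed on $\Lambda$, $\gamma$ or $f$ by the previous reductions; the collected identities reduce precisely to (AF1)--(AF7), so the data is an element of ${\mathcal A}{\mathcal F}(P)$, and the formulas \equref{extenddim1.1a}--\equref{extenddim1.1b} are recovered. If $u \neq 0$, the relations $u(\Lambda-\lambda)=0$ and $u\gamma = 0$ force $\Lambda = \lambda$ and $\gamma \equiv 0$; the identity $\lambda([p,q]) = -u\,f(p,q)$ then determines $f$ as $f(p,q) = -u^{-1}\lambda([p,q])$, so that $f$ is redundant data. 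What remains from the seven axioms is exactly (NF1) (the associative twisting of $\lambda$) together with the cocycle relation (NF2), which is the statement of \deref{coflagneab}; the formulas \equref{extenddim1.2a}--\equref{extenddim1.2b} follow at once.

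The assignment $(\lambda,\Lambda,\theta,u,\gamma,f) \mapsto \Omega(P,V)$ is manifestly injective since $V = kx$ and the scalars are the basis coefficients; the computations above show it is surjective onto ${\mathcal P}{\mathcal S}(P,V)$ with image parameterized by ${\mathcal F}(P) = {\mathcal A}{\mathcal F}(P) \sqcup {\mathcal N}{\mathcal F}(P)$. The main (purely bookkeeping) obstacle is ensuring that every one of axioms (P0)--(P7) is translated correctly and that no redundancy or extra relation is overlooked when passing between the cases $u=0$ and $u\neq 0$; the bulk of the work is a careful and systematic case-by-case verification, most of it mechanical once the dictionary above is set up.
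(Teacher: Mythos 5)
Your proposal is correct and follows essentially the same route as the paper's proof: parameterize the seven structure maps of $\Omega(P,V)$ by scalars via the basis $\{x\}$, extract the constraint $u\bigl(\Lambda(p)-\lambda(p)\bigr)=0$ from (H2), and split into the cases $u=0$ (where the axioms collapse to (AF1)--(AF7)) and $u\neq 0$ (where $\Lambda=\lambda$, $\gamma\equiv 0$ and $f(p,q)=-u^{-1}\lambda([p,q])$ are forced, leaving exactly (NF1)--(NF2)). The only quibbles are a few misattributed axiom labels --- the identity $\lambda([p,q])=-u\,f(p,q)$ comes from (P3) rather than (L3) (which instead yields $\gamma([p,q])=0$), and $\gamma(pq)=\gamma(p)\Lambda(q)+\lambda(p)\gamma(q)$ comes from (P2) rather than (P7) --- but these slips do not affect the validity of the argument, since the full list of derived identities matches the paper's.
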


\begin{proof}
We have to compute all crossed systems $\Omega(P, V)  =
\bigl(\rightharpoonup, \, \triangleleft, \, \vartheta, \, \cdot_V,
\, \triangleright \, f, [-, -]_V \bigl)$ between $P$ and $V$, i.e.
all bilinear maps which satisfy the compatibilities (H0) -- (H7),
(L0) -- (L4) and (P0) -- (P7). Since $V$ has dimension $1$ and
$(V, [-, \, -]_V)$ is a Lie algebra it follows that $ [-, \, -]_V
= 0$, the trivial map. Once again by the fact that $V$ has
dimension $1$ we obtain that any crossed system from $\Omega(P,
V)$ is uniquely determined by three linear maps $\lambda$,
$\Lambda$, $\gamma: P \to k$, two bilinear maps $\theta$, $f :
P\times P \to k$ and a scalar $u \in k$ via the following
formulas:
\begin{eqnarray*}
p \rightharpoonup x &=& \lambda (p) x, \quad \,\,\,\,\,\,\,\,
x \triangleleft p = \Lambda(p) x, \quad \vartheta (p, q) = \theta (p, q) x \\
x \cdot_V x  &=& u \, x, \qquad p \triangleright x = \gamma (p) x,
\qquad \,\,\,\,\, f(p, q) = f(p, q) x
\end{eqnarray*}
for all $p\in P$. We are left to derive the axioms that need to be
fulfilled by the above maps in order for compatibilities (H0) --
(H7), (L0) -- (L4) and (P0) -- (P7) to hold.

We start by writing down the compatibility (H2) which implies $u
\Lambda (p) = u \lambda (p)$, for all $p \in P$. Therefore we
distinguish two cases, namely $u = 0$ or $u \neq 0$. Suppose first
that $u = 0$. We will prove that in this case the axioms (H0) --
(H7), (L0) -- (L4) and (P0) -- (P7) hold if and only if $(\lambda,
\, \Lambda, \, \theta, \, \gamma, \, f)$ is an abelian co-flag
datum. It is easy to see that the compatibilities (H0), (H1), (H3)
and (H4) are trivially fulfilled. (H6) and (H7) are equivalent to
the fact that $\lambda$ and $\Lambda$ are algebra maps and thus
(AF1) holds while (H5) collapses to (AF2). Furthermore, (L0) and
(L2) are trivially fulfilled, (L1) together with (L3) collapses to
(AF3) while (L4) is equivalent to (AF4). (P1) collapses to (AF5),
(P2) is equivalent to (AF6) while (P4), (P6) and (P7) are
trivially fulfilled. Finally, (AF7) is derived from (P3) and (P5).

Suppose now that $u \neq 0$ and thus $\Lambda = \lambda$. In this
case (P3) comes down to $f(p, q) = - u^{-1} \, \lambda ( [p, \, q]
)$ while (P4) implies that $\gamma = 0$. It can be easily seen
that (H5) collapses to (NF2) while (H6) is equivalent to (NF1).
Based on these facts it is straightforward to see that the other
compatibilities are trivially fulfilled. We will only check (P1).
Indeed, the left hand side of (P1) gives:
\begin{eqnarray*}
LHS (P1) &=& f(pq,\,r) - \lambda(q) f(p,\,r) - \lambda(p)
f(q,\,r)\\
&=& \bigl(- \lambda(\underline{[pq,\,r]}) + \lambda(q) \lambda([p,
\, r]) + \lambda(p) \lambda([q,\, r])\bigl)u^{-1}\\
&\stackrel{\equref{p1}}{=}& \bigl(-\underline{\lambda([p,\, r] q)}
-\underline{\lambda(p [q, \, r])} + \lambda(q) \lambda([p, \, r])
+ \lambda(p) \lambda([q,\, r])\bigl)u^{-1}\\
&\stackrel{(NF1)}{=}& \Bigl(- \lambda([p, \, r]) \lambda(q) -
\lambda(p) \lambda([q, \, r]) + \theta([p, \, r], \, q) u +
\theta(p, \, [q, \, r]) u +\\
&&\lambda(q) \lambda([p, \, r]) + \lambda(p) \lambda([q,\,
r])\Bigl)u^{-1}\\
&=& \theta([p, \, r], \, q) + \theta(p, \, [q, \, r]) = RHS (P1)
\end{eqnarray*}
and the proof is now finished.
\end{proof}

Let $(\lambda, \, \Lambda, \, \theta, \, \gamma, \, f) \in
{\mathcal A} {\mathcal F} \, (P)$. The crossed product $P \,
\#_{(\lambda, \, \Lambda, \, \theta, \, \gamma, \, f)} \, V$
associated to the crossed system given by \equref{extenddim1.1a} -
\equref{extenddim1.1b} will be denoted by $P_{{\mathcal A}}( x
\,|\, (\lambda, \, \Lambda, \, \theta, \, \gamma, \, f))$ and has
the Poisson algebra structure defined by:
\begin{eqnarray*}
(p,\, 0) \star (q,\, 0) &=& (pq, \, \theta(p, q) x), \quad \,
\{(p,0), \, (q, 0)\} = ([p, q], \, f(p, q) x) \\
(p, \, 0) \star (0, \, x) &=& (0, \, \lambda(p) x), \quad \, \, \,
\,\,\,\,\,\, \{(p, 0), \, (0, x)\} = (0, \, \gamma(p) x)\\
(0, \, x) \star (p, \, 0) &=& (0, \, \Lambda(p) x), \quad \, \, \,
\,\,\,\,\,\, \{(0, x), \, (p, 0)\} = (0, \, - \gamma(p) x).
\end{eqnarray*}
On the other hand, for $(\lambda, \, \theta, \, u) \in {\mathcal
N} {\mathcal F} \, (P) $, the crossed product $P \, \#_{(\lambda,
\, \theta, \, u)} \, V$ associated to the crossed system given by
\equref{extenddim1.2a} - \equref{extenddim1.2b} will be denoted by
$P_{{\mathcal N}}( x \, | \, (\lambda, \, \theta, \, u))$ and has
the Poisson algebra structure defined by:
\begin{eqnarray*}
(p,\, 0) \star (q,\, 0) &=& (pq, \, \theta(p, q) x), \quad \,
(p,0)\star (0, x) = (0, \, \lambda(p) x) \\
(0, \, x) \star (0, \, x) &=& (0, \, u x), \quad
\,\,\,\,\,\,\,\,\,\,\,\,\,\,\,\,\, (0, x) \star (p, 0) = (0, \, \lambda(p) x)\\
\{(p, 0), \, (q, 0)\} &=& \bigl([p, q], \, -u^{-1} \lambda([p, q])
x\bigl).
\end{eqnarray*}

The first explicit classification result for the GE-problem
follows: it is also the key step in the classification of all
co-flag Poisson algebras algebras over $P$.

\begin{theorem}\thlabel{clsfP1}
Let $P$ be a Poisson algebra. Then:
$$
{\mathcal G} {\mathcal P} {\mathcal H}^{2} \, (P, \, k) \, \cong
\, ({\mathcal A} {\mathcal F} \, (P) /\equiv_1) \sqcup ({\mathcal
N} {\mathcal F} \, (P)/\equiv_2)\qquad {\rm where:}
$$
$\equiv_1$ is the equivalence relation on the set ${\mathcal A}
{\mathcal F} \, (P)$ defined as follows: $(\lambda, \, \Lambda, \,
\theta, \, \gamma, \, f) \equiv_1 (\lambda ', \, \Lambda ', \,
\theta ', \, \gamma ', \, f ')$ if and only of $\lambda = \lambda
'$, $\Lambda = \Lambda '$, $\gamma = \gamma '$ and there exists a
linear map $r:P \to k$ such that for any $p$, $q \in P$:
\begin{eqnarray}
\theta(p, \,q) &=& \theta ' (p, \, q) + r(q) \, \lambda'(p) +
r(p) \,\Lambda'(q) - r(pq)\\
f(p, \, q) &=& f'(p, \, q) + r(q)\, \gamma^{'}(p) - r(p) \,
\gamma^{'}(q) - r([p,\,q]),
\end{eqnarray}
$\equiv_2$ is the equivalence relation on ${\mathcal N} {\mathcal
F} \, (P)$ given by: $(\lambda, \, \theta, \, u) \equiv_2 (\lambda
', \, \theta ', \, u ')$ if and only if $u = u '$ and there exists
a linear map $r: P \to k$ such that for any $p$, $q \in P$:
\begin{eqnarray}
\lambda(p) &=& \lambda ' (p) + r(p) \, u\\
\theta(p,\,q) &=& \theta '(p,\,q) + r(q)\, \lambda'(p) + r(p) \,
\lambda'(q) - r(pq) + r(p)\,r(q) \,u.
\end{eqnarray}
\end{theorem}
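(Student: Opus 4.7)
The plan is to combine \thref{main1222} with \prref{coflagdim1} and translate the cohomology equivalence $\approx$ of \deref{echiaa} through the bijection $\mathcal{PS}(P, k) \cong \mathcal{F}(P) = \mathcal{AF}(P) \sqcup \mathcal{NF}(P)$. Once this dictionary is set up, the theorem reduces to a direct reading of the compatibilities \equref{coc1}--\equref{coc5} in the one-dimensional setting, where the linear map $r: P \to V = k$ is simply a $k$-valued linear functional on $P$.

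First I would argue that the coproduct decomposition is forced. Indeed, \deref{echiaa} requires $\cdot_V = \cdot_V'$ and $[-, -]_V = [-, -]_V'$ as a prerequisite for two crossed systems to be cohomologous. In the one-dimensional bijection of \prref{coflagdim1}, the bracket $[-, -]_V$ is always trivial, but the multiplication $\cdot_V$ satisfies $x \cdot_V x = 0$ in the abelian branch $\mathcal{AF}(P)$ and $x \cdot_V x = u x$ with $u \neq 0$ in the non-abelian branch $\mathcal{NF}(P)$. Hence no abelian co-flag datum can be cohomologous to a non-abelian one, and $\equiv_1$ (resp.\ $\equiv_2$) is exactly the restriction of $\approx$ to $\mathcal{AF}(P)$ (resp.\ $\mathcal{NF}(P)$).

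Next I would unpack the cocycle equations in each branch, using that any linear map $r: P \to k$ encodes the candidate $P \to V$. In the abelian case (formulas \equref{extenddim1.1a}--\equref{extenddim1.1b}) equation \equref{coc1} forces $\lambda = \lambda'$ and \equref{coc2} forces $\Lambda = \Lambda'$, since $\cdot_V = 0$; similarly \equref{coc4} forces $\gamma = \gamma'$ because $[-, -]_V = 0$. The remaining conditions \equref{coc3} and \equref{coc5} then produce precisely the two displayed identities defining $\equiv_1$. In the non-abelian case (formulas \equref{extenddim1.2a}--\equref{extenddim1.2b}), equation \equref{coc1} now reads $\lambda(p) = \lambda'(p) + r(p)\,u$; equation \equref{coc2} gives the same relation (recall $\Lambda = \lambda$ when $u \neq 0$), hence is automatic; equation \equref{coc4} is automatic since $\gamma = 0$ and $[-, -]_V = 0$; equation \equref{coc3} becomes the cocycle identity for $\theta$ listed in $\equiv_2$; and equation \equref{coc5} becomes automatic because in both crossed systems $f$ is determined by $\lambda$ via $f(p,q) = -u^{-1}\lambda([p,q])$ (see \equref{extenddim1.2b}), and a short computation using the already established identity $\lambda(p) = \lambda'(p) + r(p)u$ together with $r([p,q])$ being implied by the $\theta$-relation confirms consistency. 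Finally, $u = u'$ is forced since $\cdot_V = \cdot_V'$.

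The main obstacle I anticipate is bookkeeping in the non-abelian branch: one has to verify that the cocycle condition \equref{coc5} for $f$ is redundant given the relation on $\lambda$ and $\theta$, which in turn requires using axiom (NF1) applied to both $(\lambda, \theta)$ and $(\lambda', \theta')$. Once this redundancy is established, the remaining verifications are routine, and assembling the two quotients into the disjoint union via \thref{main1222} completes the proof.
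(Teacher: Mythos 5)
Your proposal is correct and follows essentially the same route as the paper: apply \prref{coflagdim1} to identify ${\mathcal P}{\mathcal S}(P,k)$ with ${\mathcal A}{\mathcal F}(P)\sqcup{\mathcal N}{\mathcal F}(P)$, observe that the two branches are never cohomologous because $\cdot_V \neq \cdot_V'$ there (the paper cites (M1) of \leref{HHH} for this), and then transcribe \equref{coc1}--\equref{coc5} in each branch to obtain $\equiv_1$ and $\equiv_2$. Your worked-out details are right, including the redundancy of \equref{coc5} in the non-abelian branch -- though note that this redundancy follows directly from the relation $\lambda = \lambda' + u\,r$ evaluated at $[p,q]$, with no need to invoke (NF1).
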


\begin{proof}
By applying \prref{coflagdim1} for $V = k$ we know that the set
${\mathcal P} {\mathcal S} \, (P, k)$ of all crossed systems of
$P$ by $k$ is in bijection to the set ${\mathcal F} \, (P) =
{\mathcal A} {\mathcal F} \, (P) \, \sqcup \, {\mathcal N}
{\mathcal F} \, (P)$ of all co-flag data of $P$. Thus, the problem
reduces to computing the set $\Bigl({\mathcal A} {\mathcal F} \,
(P) \, \sqcup \, {\mathcal N} {\mathcal F} \, (P)\Bigl) /
\approx$. Based on these facts, a little computation shows that
the compatibility conditions from \deref{echiaa}, imposed for the
crossed systems \equref{extenddim1.1a}-\equref{extenddim1.1b} and
respectively \equref{extenddim1.2a}-\equref{extenddim1.2b}, take
precisely the form given in the statement of the theorem. To this
end we should notice that an abelian co-flag datum is never
cohomologous to a non-abelian co-flag datum thanks to the
compatibility condition (M1) from \leref{HHH}.
\end{proof}

We provide a first example which relies on \thref{clsfP1}:

\begin{example} \exlabel{calexpext}
Let $\mathfrak{H}$ be the Heisenberg Lie algebra with the basis
$\{h_{1}, h_{2}, h_{3}\}$ and the bracket defined by $ [h_{1}, \,
h_{2}] = h_{3}$. $\mathfrak{H}$ admits a Poisson algebra structure
\cite{gozeremm} with the associative multiplication given by
$h_{1}^{2} = h_{3}$. Then ${\mathcal G} {\mathcal P} {\mathcal
H}^{2} \, (\mathfrak{H}, \, k) \cong k^{4} \, \sqcup \, k^{*} \,
\sqcup \, k^{*} \, \sqcup \, (k^* \times k^*) \, \sqcup \, k^{*}$
and the equivalence classes of all non-cohomologous
$4$-dimensional co-flag algebras over $\mathfrak{H}$ are
represented by the following Poisson algebras with basis $\{e_1,
\, e_2, \, e_3, \, e_4 \}$, multiplication and bracket given by:
\begin{eqnarray*}
&\mathfrak{H}^{\tau}_{\lambda, \zeta, \xi}:& \quad e_{1} \star
e_{1} = e_{3}, \, e_{2} \star e_{2} = \lambda e_{4}, \, e_{3}
\star e_{3} = \zeta e_{4}, \, e_{4} \star e_{4} = \xi e_{4},\\
&& \quad \{e_{1}, \, e_{2}\} = e_{3} + \tau e_{4}, \,
{\rm where} \, \tau, \lambda, \zeta, \xi \in k\\
&\mathfrak{H}^{v}:& \quad e_{1} \star e_{1} = e_{3}, \, \{e_{1},\, e_{2}\}
= e_{3}, \, \{e_{2}, \, e_{4}\} = v e_{4}, \,  {\rm where} \, v \in k^{*}\\
&\overline{\mathfrak{H}}^{v}:& \quad e_{1} \star e_{1} = e_{3}, \, \{e_{1},\, e_{2}\}
= e_{3}, \, \{e_{1}, \, e_{4}\} = v e_{4}, \,  {\rm where} \, v \in k^{*}\\
&\mathfrak{H}^{v, w}:& \quad e_{1} \star e_{1} = e_{3}, \, \{e_{1},\, e_{2}\}
= e_{3}, \, \{e_{1}, \, e_{4}\} = v e_{4}, \, \{e_{2}, \, e_{4}\} = w e_{4},  \,
{\rm where} \, v, w \in k^{*}\\
&\widetilde{\mathfrak{H}}^{v}:& \quad e_{1} \star e_{1} = e_{3} + v e_{4}, \,
\{e_{1}, \, e_{2}\} = e_{3},  \,  {\rm where} \, v \in k^{*}.\\
\end{eqnarray*}
To this end we use first \prref{coflagdim1}. However, as the
computations are rather long but straightforward we will only
indicate the main steps of the proof. It can be shown that the
abelian co-flag data of $\mathfrak{H}$ are given as follows:
\begin{eqnarray*}
&(\lambda_{1}, \, \Lambda_{1}, \, \theta_{1}, \, \gamma_{1},\, f_{1}):&
\quad \lambda_{1} = \Lambda_{1}= \gamma_{1} \equiv 0, \, f_{1}(e_{1}, e_{2}) = - f_{1}(e_{2}, e_{1}) = \sigma\\
&& \quad \theta_{1}(e_{1}, e_{1}) = \beta_{1}, \, \theta_{1}(e_{2}, e_{2})
= \beta_{2}, \, \theta_{1}(e_{1}, e_{2}) = \beta_{3}, \, \theta_{1}(e_{2}, e_{1}) = \beta_{4}\\
&& \quad {\rm with}\,\, \sigma, \beta_{i} \in k, i \in \{1, 2, 3\}\\
&(\lambda_{2}, \, \Lambda_{2}, \, \theta_{2}, \, \gamma_{2},\,
f_{2}):&
\quad \lambda_{2} = \Lambda_{2} \equiv 0, \, \gamma_{2}(e_{2})= v, \,\theta_{2}(e_{1}, e_{1}) = \omega,\\
&& \quad f_{2}(e_{1}, e_{2}) = - f_{2}(e_{2}, e_{1}) =
\eta, \, f_{2}(e_{3}, e_{2}) = -f_{2}(e_{2}, e_{3}) = v \omega,\\
&& \quad {\rm with}\,\, v \in k^{*},\, \omega, \eta \in k\\
&(\lambda_{3}, \, \Lambda_{3}, \, \theta_{3}, \, \gamma_{3},\, f_{3}):&
\quad \lambda_{3} = \Lambda_{3} \equiv 0, \, \gamma_{3}(e_{1}) = w, \, \theta_{3}(e_{1}, e_{1}) = \zeta,\\
&& \quad f_{3}(e_{1}, e_{2}) = - f_{3}(e_{2}, e_{1})
= \mu, \, f_{3}(e_{3}, e_{1}) = - f_{3}(e_{1}, e_{3}) = - w \zeta,\\
&& \quad {\rm with}\,\, w \in k^{*},\, \mu, \zeta \in k\\
&(\lambda_{4}, \, \Lambda_{4}, \, \theta_{4}, \, \gamma_{4},\, f_{4}):&
\quad \lambda_{4} = \Lambda_{4} \equiv 0, \, \gamma_{4}(e_{1})
= \overline{v}, \, \gamma_{4}(e_{2}) = \overline{w}, \, \theta_{4}(e_{1}, e_{1}) = \nu,\\
&& \quad f_{4}(e_{1}, e_{2}) = -f_{4}(e_{2}, e_{1})
= \delta, \, f_{4}(e_{3}, e_{1}) = - f_{4}(e_{1}, e_{3}) = \overline{v} \nu,\\
&& \quad f_{4}(e_{3}, e_{2}) = - f_{4}(e_{2}, e_{3}) =
\overline{w}\nu, \,\, {\rm with}\,\, \overline{v}, \overline{w}
\in k^{*}, \, \nu, \delta \in k.
\end{eqnarray*}
By \thref{clsfP1} it follows that two abelian co-flag data
$(\lambda_{i}, \, \Lambda_{i}, \, \theta_{i}, \, \gamma_{i},\,
f_{i})$ and $(\lambda_{j}, \, \Lambda_{j}, \\ \, \theta_{j}, \,
\gamma_{j},\, f_{j})$ with $i \neq j$ are never equivalent since
$\gamma_{i}$ is obviously different from $\gamma_{j}$. Using again
\thref{clsfP1} we can easily notice that an abelian co-flag datum
$(\lambda_{1}, \, \Lambda_{1}, \, \theta_{1}, \, \gamma_{1},\,
f_{1})$ implemented by $\sigma$, $\beta_{i} \in k$, $i \in
\{1,..., 4\}$ is equivalent to an abelian co-flag datum
$(\lambda_{1}, \, \Lambda_{1}, \, \theta_{1}, \, \gamma_{1},\,
f_{1})$ implemented by $\sigma - \beta_{1}$, $0$, $\beta_{2}$,
$\beta_{3}$, $\beta_{4}$. The Poisson algebras denoted by
$\mathfrak{H}^{\tau}_{\lambda, \zeta, \xi}$ are obtained from
these type of co-flag data. Moreover, it is straightforward to see
that an abelian co-flag datum $(\lambda_{2}, \, \Lambda_{2}, \,
\theta_{2}, \, \gamma_{2},\, f_{2})$ implemented by $v \in
k^{*}$,\, $\omega$, $\eta \in k$ is equivalent to an abelian
co-flag datum $(\lambda_{2}, \, \Lambda_{2}, \, \theta_{2}, \,
\gamma_{2},\, f_{2})$ implemented by $v \in k^{*}$ and $\omega =
\eta = 0$ which gives rise to the Poisson algebra denoted by
$\mathfrak{H}^{v}$. In the same manner any abelian co-flag datum
$(\lambda_{3}, \, \Lambda_{3}, \, \theta_{3}, \, \gamma_{3},\,
f_{3})$ implemented by $w \in k^{*}$,\, $\mu$, $\zeta \in k$ is
equivalent to an abelian co-flag datum $(\lambda_{3}, \,
\Lambda_{3}, \, \theta_{3}, \, \gamma_{3},\, f_{3})$ implemented
by $w \in k^{*}$ and $\mu = \zeta = 0$ while any abelian co-flag
datum $(\lambda_{4}, \, \Lambda_{4}, \, \theta_{4}, \,
\gamma_{4},\, f_{4})$ implemented by $\overline{v}$, $\overline{w}
\in k^{*}$, \, $\nu$, $\delta \in k$ is equivalent to an abelian
co-flag datum $(\lambda_{4}, \, \Lambda_{4}, \, \theta_{4}, \,
\gamma_{4},\, f_{4})$ implemented by $\overline{v}$, $\overline{w}
\in k^{*}$ and $\nu = \delta = 0$. The latter two abelian co-flag
data give rise to the Poisson algebras denoted by
$\overline{\mathfrak{H}}^{v}$ and $\mathfrak{H}^{v, w}$
respectively.

On the other hand, the non-abelian co-flag data of $\mathfrak{H}$
are given as follows:
\begin{eqnarray*}
&(\lambda, \theta, u):& \quad \lambda(h_{i}) = \alpha_{i} \in k, \,\, i \in \{1, 2, 3\}, \quad u \in k^{*}, \\
&& \quad
\begin{tabular}{c|ccc}
  $\theta$ & $h_1$ & $h_2$ & $h_3$ \\\hline
  $h_1$   & $(\alpha_{1}^{2} - \alpha_{3})u^{-1}$ & $\alpha_{1} \alpha_{2} u^{-1}$ & $\alpha_{1} \alpha_{3} u^{-1}$   \\
  $h_2$   & $\alpha_{1} \alpha_{2} u^{-1}$ &  $\alpha_{2}^{2}u^{-1}$  & $\alpha_{2} \alpha_{3} u^{-1}$    \\
  $h_3$   & $\alpha_{3} \alpha_{1} u^{-1}$ & $\alpha_{3} \alpha_{2} u^{-1}$  & $\alpha_{3}^{2} u^{-1}$   \\
\end{tabular}
\end{eqnarray*}
By a routine computation based on \thref{clsfP1} we obtain that
the non-abelian co-flag datum $(\lambda, \theta, u)$ implemented
by $u \in k^{*}$, \, $\alpha_{1}$, $\alpha_{2}$, $\alpha_{3}$ is
equivalent to the non-abelian co-flag datum $(\lambda, \theta, u)$
implemented by $u \in k^{*}$, $0$, $0$, $0$ which gives rise to
the Poisson algebra denoted by $\widetilde{\mathfrak{H}}^{u}$.
Thus ${\mathcal G} {\mathcal P} {\mathcal H}^{2} \, (\mathfrak{H},
\, k) \cong k^{4} \, \sqcup \, k^{*} \, \sqcup \, k^{*} \, \sqcup
\, k^{*2} \, \sqcup \, k^{*}$.
\end{example}

Next we will highlight the efficiency of \thref{clsfP1} in
classifying co-flag Poisson algebras of small dimension. We start
by computing all $2$-dimensional co-flag Poisson algebras. Since
there are two non-isomorphic Poisson algebra structures on a
vector space of dimension one, namely $k_{0}$ and $k_{1}$, we have
to consider the cases $P = k_{0}$ and $P = k_{1}$.

\begin{corollary}\colabel{dim2Poissoncoflag}
Let $k$ be a field. Then ${\mathcal G} {\mathcal P} {\mathcal
H}^{2} \, (k_{0}, \, k) \cong k^{*}\, \sqcup \, k \, \sqcup \,
k^{*}$ and $ {\mathcal G} {\mathcal P} {\mathcal H}^{2} \, (k_{1},
\, k) \cong \{*\} \, \sqcup \, \{*\} \, \sqcup k \, \sqcup \, k \,
\sqcup \, k^{*} $ where $\{*\}$ denotes the singleton set.
Explicitly, any $2$-dimensional co-flag Poisson algebra is
cohomologous to one of the following Poisson algebras with basis
$\{e_{1}, \, e_{2}\}$, multiplication and bracket defined by:
\begin{eqnarray*}
& k_{0, \delta}^{2}: & \quad e_{1} \star e_{1} = \delta e_{2}, \,
{\rm where} \,\delta \in k^{*}\\
& {}_{\mu}k_{0}^{2}: & \quad \{e_{1}, \, e_{2}\} = \mu e_{2},\,
{\rm where} \, \mu \in k\\
& k_{0,u}^{2}:& \quad e_{2} \star e_{2} = u e_{2}, \, {\rm where} \, u \in k^{*}\\
& k_{1}^{2}: & \quad e_{1} \star e_{1} = e_{1}\\
& \overline{k}_{1}^{2}: &\quad e_{1} \star e_{1} = e_{1}, \, e_{1}
\star e_{2} = e_{2} \star e_{1} = e_{2}
\end{eqnarray*}
\begin{eqnarray*}
& {}_{\mu}k_{1}^{2}: & \quad e_{1} \star e_{1} =
 e_{1}, \, e_{1} \star e_{2} =  e_{2}, \, \{e_{1}, \, e_{2}\} = \mu e_{2},\,
{\rm where} \, \mu \in k\\
& {}_{\mu}\overline{k}_{1}^{2}: & \quad e_{1} \star e_{1} =
 e_{1}, \, e_{2} \star e_{1} =  e_{2}, \, \{e_{1}, \, e_{2}\} = \mu e_{2},\,
{\rm where} \, \mu \in k\\
& k_{u}^{2}: & \quad e_{1} \star e_{1} = e_{1}, \, e_{2} \star
e_{2} = u e_{2},  \, {\rm where} \, u \in k^{*}.
\end{eqnarray*}
\end{corollary}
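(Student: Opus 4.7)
The plan is to apply \thref{clsfP1}, which provides a bijection ${\mathcal G}{\mathcal P}{\mathcal H}^{2}(P, k) \cong \bigl({\mathcal A}{\mathcal F}(P)/\equiv_1\bigr) \sqcup \bigl({\mathcal N}{\mathcal F}(P)/\equiv_2\bigr)$, separately for $P = k_0$ and $P = k_1$. I will parametrize a co-flag datum by the scalars $\alpha := \lambda(1)$, $\beta := \Lambda(1)$, $g := \gamma(1)$, $t := \theta(1,1)$, $\varphi := f(1,1)$ (and, for a non-abelian datum, $\alpha := \lambda(1)$ together with $u$), systematically determine which scalar tuples satisfy the axioms (AF1)--(AF7) or (NF1)--(NF2), and then rewrite $\equiv_1, \equiv_2$ in scalar form. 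Each canonical representative will be turned into the corresponding Poisson algebra via \prref{coflagdim1} and the explicit formulas \equref{extenddim1.1a}--\equref{extenddim1.2b}.

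For $P = k_0$ the relations $pq = 0$ and $[p, q] = 0$ drastically simplify matters: (AF1) forces $\alpha = \beta = 0$, since $\lambda(1)^{2} = \lambda(0) = 0$ in $k$; (AF3) forces $\varphi = 0$; axioms (AF2), (AF4), (AF6), (AF7) are automatic; and (AF5) collapses to $g t = 0$. Hence ${\mathcal A}{\mathcal F}(k_0)$ is identified with $\{(t, g) \in k^2 : g t = 0\} = \{(t, 0) : t \in k\} \sqcup \{(0, g) : g \in k^{*}\}$. Each correction term in $\equiv_1$ either involves $r(pq) = r(0) = 0$, or $r([p,q]) = 0$, or a vanishing $\lambda', \Lambda'$, so this equivalence reduces to the identity and we obtain $k \sqcup k^{*}$ abelian classes. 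For ${\mathcal N}{\mathcal F}(k_0)$ the constraint (NF1) determines $\theta(1,1) = \alpha^{2}/u$ while (NF2) is automatic; $\equiv_2$ shifts $\alpha$ freely by any multiple of $u$, so each $u \in k^{*}$ indexes exactly one class and we gain $k^{*}$ further classes. Unwinding the crossed product via \prref{coflagdim1} realises these classes as the listed algebras $k^{2}_{0, \delta}$, ${}_\mu k^{2}_{0}$, and $k^{2}_{0, u}$, with the trivial Poisson structure matched to ${}_{0} k^{2}_{0}$.

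For $P = k_1$, (AF1) forces $\alpha^{2} = \alpha$ and $\beta^{2} = \beta$, so $(\alpha, \beta) \in \{0, 1\}^{2}$; (AF3) gives $\varphi = 0$; and evaluating (AF2), (AF5), (AF6) at $p = q = r = 1$ yields the scalar constraints $t(\beta - \alpha) = 0$, $g t = 0$, and $g(1 - \alpha - \beta) = 0$. A four-fold case split emerges: $(\alpha, \beta) \in \{(0,0), (1,1)\}$ force $g = 0$ with $t$ free, while $(\alpha, \beta) \in \{(1,0), (0,1)\}$ force $t = 0$ with $g$ free. The key computation for $\equiv_1$ is that, writing $\rho := r(1)$, the $\theta$-correction at $p = q = 1$ equals $\rho(\alpha + \beta - 1)$: in the first two cases this is $\mp \rho$, so all $t$ collapse into a single class each; in the remaining two cases every correction term cancels, and since $\gamma = \gamma'$ is required each $g \in k$ remains its own class. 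Hence ${\mathcal A}{\mathcal F}(k_{1})/\equiv_{1} \cong \{*\} \sqcup \{*\} \sqcup k \sqcup k$. The non-abelian analysis parallels the one for $k_0$ (with $\alpha^{2}$ replaced by $\alpha^{2} - \alpha$ in (NF1)) and produces $k^{*}$ further classes. The main obstacle is the bookkeeping in these four abelian branches: namely, choosing the canonical representative ($t = 0$ in the collapsed branches, $g \in k$ in the other two) and verifying via \equref{extenddim1.1a}--\equref{extenddim1.1b} that the corresponding crossed products are precisely $k^{2}_{1}$, $\overline{k}^{2}_{1}$, ${}_\mu k^{2}_{1}$, ${}_\mu \overline{k}^{2}_{1}$, and via \equref{extenddim1.2a}--\equref{extenddim1.2b} (with canonical $\alpha = 0$) that the non-abelian class realises $k^{2}_{u}$.
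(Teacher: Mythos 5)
Your proposal is correct and follows essentially the same route as the paper: apply \thref{clsfP1}, enumerate the abelian and non-abelian co-flag data of $k_0$ and $k_1$ via \prref{coflagdim1}, and reduce the equivalence relations $\equiv_1$, $\equiv_2$ to scalar conditions. Your scalar parametrization and the explicit case split on $(\alpha,\beta)\in\{0,1\}^2$, with the correction term $\rho(\alpha+\beta-1)$ deciding which branches collapse, reproduces exactly the paper's four abelian families and one non-abelian family for $k_1$ (and the corresponding two-plus-one families for $k_0$).
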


\begin{proof}
In what follows we consider $\{y\}$ as a basis in $P = k$. Suppose
first that $P = k_{0}$. Then the abelian co-flag data of $k_0$ are
given as follows:
\begin{eqnarray*}
&(\lambda_{1}, \, \Lambda_{1}, \, \theta_{1}, \, \gamma_{1},\, f_{1}):&
\quad \lambda_{1} = \Lambda_{1}= \gamma_{1} \equiv 0, \,\,
\theta_{1}(y, y) = \delta, \,\, f_{1}\equiv 0, \, {\rm with}\,\, \delta \in k\\
&(\lambda_{2}, \, \Lambda_{2}, \, \theta_{2}, \, \gamma_{2},\,
f_{2}):& \quad \lambda_{2} = \Lambda_{2} \equiv 0, \,\, \theta_{2}
= f_{2} \equiv 0, \,\, \gamma_{2}(y) = \mu, \, {\rm with}\,\, \mu
\in k^{*}.
\end{eqnarray*}
An abelian co-flag datum $(\lambda_{1}, \, \Lambda_{1}, \,
\theta_{1},\, \gamma_{1},\, f_{1})$ implemented by $\delta \in k$
is equivalent to another abelian co-flag datum $(\lambda_{1}, \,
\Lambda_{1}, \, \theta_{1}, \, \gamma_{1}, \, f_{1})$ implemented
by $\delta ' \in k$ if and only if $\delta = \delta'$. In the same
manner it follows that an abelian co-flag datum $(\lambda_{2}, \,
\Lambda_{2}, \, \theta_{2}, \, \gamma_{2},\, f_{2})$ implemented
by $\mu \in k^{*}$ is equivalent to another abelian co-flag datum
$(\lambda_{2}, \, \Lambda_{2}, \, \theta_{2}, \, \gamma_{2},\,
f_{2})$ implemented by $\mu ' \in k^{*}$ if and only if $\mu = \mu
'$. Moreover, as the abelian co-flag data $(\lambda_{2}, \,
\Lambda_{2}, \, \theta_{2}, \, \gamma_{2},\, f_{2})$ are
implemented by a non-zero scalar $\mu \in k^{*}$ it follows that
$(\lambda_{2}, \, \Lambda_{2}, \, \theta_{2}, \, \gamma_{2},\,
f_{2})$ is never equivalent to a co-flag datum $(\lambda_{1}, \,
\Lambda_{1}, \, \theta_{1}, \, \gamma_{1},\, f_{1})$. The two
non-equivalent abelian co-flag data give rise to the Poisson
algebras denoted by $k_{0, \delta}^{2}$ and respectively
${}_{\mu}k_{0}^{2}$, $\delta \in k$, $\mu \in k^{*}$.

On the other hand, the non-abelian co-flag data of $k_0$ are given
as follows:
\begin{eqnarray*}
&(\lambda, \, \theta, \, u):& \quad \lambda(y) = \alpha, \,\,
\theta(y, y) = \alpha^{2} u^{-1}, \,\, {\rm with}\,\, \alpha \in
k,\, u \in k^{*}.
\end{eqnarray*}
Again by \thref{clsfP1} it follows that two non-abelian co-flag
data corresponding to scalars $(\alpha, u) \in k \times k^{*}$ and
respectively $(\alpha ', u') \in k \times k^{*}$ are equivalent if
and only if $u = u'$. Therefore any non-abelian co-flag datum
implemented by $(\alpha, u)$ is equivalent to a non-abelian
co-flag datum implemented by $(0, u)$ which gives rise to the
Poisson algebra denoted by $k_{0,u}^{2}$. Thus we obtain
${\mathcal G} {\mathcal P} {\mathcal H}^{2} \, (k_{0}, \, k) \cong
k^{*}\, \sqcup \, k \, \sqcup \, k^{*}$.

Now we turn to the second case, namely $P = k_{1}$. The abelian
co-flag data of $k_1$ are given as follows:
\begin{eqnarray*}
&(\lambda_{1}, \, \Lambda_{1}, \, \theta_{1}, \, \gamma_{1},\, f_{1}):&
\quad \lambda_{1} = \Lambda_{1}= \gamma_{1} \equiv 0, \,\, \theta_{1}(y, y)
= \delta, \,\, f_{1}\equiv 0, \, {\rm with}\,\, \delta \in k\\
&(\lambda_{2}, \, \Lambda_{2}, \, \theta_{2}, \, \gamma_{2},\,
f_{2}):& \quad \lambda_{2}(y) = \Lambda_{2}(y) = 1, \,\,
\theta_{2}(y, y) = \zeta, \,\, \gamma_{2} \equiv 0, \,\, f_{2}
\equiv 0, \, {\rm with}\,\, \zeta \in k\\
&(\lambda_{3}, \, \Lambda_{3}, \, \theta_{3}, \, \gamma_{3},\,
f_{3}):& \quad \lambda_{3}(y) = 1, \, \, \Lambda_{3} \equiv 0,
\,\,\gamma_{3}(y) =  \mu,\,\,
 \theta_{3} = f_{3} \equiv 0, \,  {\rm with}\,\, \mu \in k\\
&(\lambda_{4}, \, \Lambda_{4}, \, \theta_{4}, \, \gamma_{4},\,
f_{4}):& \quad \lambda_{4} \equiv 0, \, \, \Lambda_{4}(y) = 1,
\,\,\gamma_{4}(y) =  \mu,\,\,
 \theta_{4} = f_{4} \equiv 0, \,  {\rm with}\,\, \mu \in k.
\end{eqnarray*}
Since by \thref{clsfP1} two equivalent abelian co-flag data need
to have the same three maps $\lambda$, $\Lambda$ and $\gamma$ it
easily follows that an abelian co-flag datum $(\lambda_{i}, \,
\Lambda_{i}, \, \theta_{i}, \, \gamma_{i},\, f_{i})$ is never
equivalent to $(\lambda_{j}, \, \Lambda_{j}, \, \theta_{j}, \,
\gamma_{j},\, f_{j})$ if $i \neq j$. Moreover, for any $\delta \in
k$ the abelian co-flag datum $(\lambda_{1}, \, \Lambda_{1}, \,
\theta_{1}, \, \gamma_{1},\, f_{1})$ implemented by $\delta$ is
equivalent to the the abelian co-flag datum $(\lambda_{1}, \,
\Lambda_{1}, \, \theta_{1}, \, \gamma_{1},\, f_{1})$ implemented
by $0$ which give rise to the Poisson algebra denoted by
$k_{1}^{2}$. In the same manner for any $\zeta \in k$ the abelian
co-fag datum $(\lambda_{2}, \, \Lambda_{2}, \, \theta_{2}, \,
\gamma_{2},\, f_{2})$ implemented by $\zeta$ is equivalent to the
the abelian co-fag datum $(\lambda_{2}, \, \Lambda_{2}, \,
\theta_{2}, \, \gamma_{2},\, f_{2})$ implemented by $0$ which
gives rise to the Poisson algebra denoted by
$\overline{k}_{1}^{2}$. The situation changes if we look at the
abelian co-flag data $(\lambda_{3}, \, \Lambda_{3}, \, \theta_{3},
\, \gamma_{3},\, f_{3})$: two such data implemented by $\mu$ and
respectively $\mu '$ are equivalent if and only if $\mu = \mu'$.
The abelian co-flag datum $(\lambda_{3}, \, \Lambda_{3}, \,
\theta_{3}, \, \gamma_{3},\, f_{3})$ implemented by $\mu \in k$
gives rise to the Poisson algebra denoted by ${}_{\mu}k_{1}^{2}$.
Finally, the abelian co-flag datum $(\lambda_{4}, \, \Lambda_{4},
\, \theta_{4}, \, \gamma_{4},\, f_{4})$ gives rise to the Poisson
algebra denoted by ${}_{\mu}\overline{k}_{1}^{2}$.

The non-abelian co-flag data of $k_1$ are given as follows:
\begin{eqnarray*}
&(\lambda, \, \theta, \, u):& \quad \lambda(y) = \alpha, \,\,
\theta(y, y) = (\alpha^{2} - \alpha) u^{-1}, \,\, {\rm with}\,\,
\alpha \in k,\, u \in k^{*}.
\end{eqnarray*}
Two non-abelian co-flag data corresponding to scalars $(\alpha, u)
\in k \times k^{*}$ and respectively $(\alpha ', u') \in k \times
k^{*}$ are equivalent if and only if $u = u'$. Therefore any
non-abelian co-flag datum implemented by $(\alpha, u)$ is
equivalent to a non-abelian co-flag datum implemented by $(0, u)$
which gives rise to the Poisson algebra denoted by $k_{u}^{2}$.
\end{proof}

We can now apply our recursive method to the Poisson algebras
described in \coref{dim2Poissoncoflag}: we will obtain the
description and classification of all $3$-dimensional co-flag
Poisson algebras. In what follows we will only list the
$3$-dimensional co-flag Poisson algebras over ${}_{1}k^2_1$.

\begin{corollary}\colabel{dim2Poissoncoflag}
Any $3$-dimensional co-flag Poisson algebra over ${}_{1}k_{1}^{2}$
is cohomologous to one of the following Poisson algebras with
basis $\{e_{1}, \, e_{2}, \, e_{3}\}$:
\begin{eqnarray*}
& {}_{1}k_{1}^{3}: & \quad e_{1} \star e_{1} =  e_{1}, \, e_{1}
\star e_{2} = e_{2}, \, \{e_{1},
\, e_{2}\} = e_{2}\\
& {}_{1}\overline{k}_{1}^{3}: & \quad e_{1} \star e_{1} =  e_{1},
\, e_{1} \star e_{2} = e_{2}, \, e_{1} \star e_{3} = e_{3} \star
e_{1} = e_{3},\, \{e_{1}, \, e_{2}\} = e_{2}\\
& {}_{\nu}k_{1}^{3}:& \quad e_{1} \star e_{1} =  e_{1}, \, e_{1}
\star e_{2} = e_{2}, \, e_{1} \star e_{3} = e_{3}, \, \{e_{1}, \,
e_{3}\} = e_{3},\\
&& \quad \{e_{1},
\, e_{2}\} = e_{2} + \nu e_{3}, \,\, {\rm where} \,\, \nu \in k\\
& {}_{\omega}\widetilde{k}_{1}^{3}:& \quad e_{1} \star e_{1} =
e_{1}, \, e_{1} \star e_{2} = e_{2}, \, e_{1} \star e_{3} = e_{3},
\, \{e_{1}, \, e_{3}\} = \omega e_{3}, \,\, \{e_{1},
\, e_{2}\} = e_{2},\\
&& \quad {\rm where} \,\, \omega \in k - \{1\}\\
& {}_{\tau}\overline{k}_{1}^{3}:& \quad e_{1} \star e_{1} = e_{1},
\, e_{1} \star e_{2} = e_{2}, \, e_{3} \star e_{1} = e_{3}, \,
\{e_{1}, \, e_{3}\} = \tau e_{3}, \,\, \{e_{1},
\, e_{2}\} = e_{2},\\
&& \quad {\rm where} \,\, \tau \in k\\
& k_{1, u}^{3}: & \quad e_{1} \star e_{1} = e_{1}, \, e_{3} \star
e_{3} = u e_{3}, \, \{e_{1}, \, e_{2}\} = e_{2}, \,\, {\rm
where}\,\, u \in k^{*}.
\end{eqnarray*}
Furthermore, we have ${\mathcal G} {\mathcal P} {\mathcal H}^{2}
\, ({}_{1}k_{1}^{2}, \, k) \cong \{*\}\, \sqcup \{*\}\, \sqcup  \,
k \, \sqcup \, (k -\{1\}) \, \sqcup \, k \, \sqcup \, k^{*}$.
\end{corollary}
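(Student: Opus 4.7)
The plan is to apply \thref{clsfP1} to the Poisson algebra $P := {}_{1}k_{1}^{2}$, with basis $\{e_1, e_2\}$, multiplication $e_1 \star e_1 = e_1$, $e_1 \star e_2 = e_2$ and bracket $\{e_1, e_2\} = e_2$. This reduces the problem to computing the two quotient sets $({\mathcal A} {\mathcal F} \, (P)/\equiv_1)$ and $({\mathcal N} {\mathcal F} \, (P)/\equiv_2)$ and identifying each equivalence class with the $3$-dimensional Poisson algebra produced via the formulas \equref{extenddim1.1a}--\equref{extenddim1.2b} defining $P_{{\mathcal A}}(x \,|\, \cdot)$ and $P_{{\mathcal N}}(x \,|\, \cdot)$.

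First, I would parameterize ${\mathcal A} {\mathcal F} \, (P)$ systematically. Each of $\lambda, \Lambda, \gamma \colon P \to k$ is determined by its two scalar values on the basis, while $\theta$ and $f$ are each determined by four scalars. Axiom (AF1), combined with (AF7) and the relation $\{e_1, e_2\} = e_2$, forces $\lambda(e_2) = \Lambda(e_2) = 0$ and $\lambda(e_1), \Lambda(e_1) \in \{0, 1\}$; axiom (AF3) forces $\gamma(e_2) = 0$; and (AF6) applied to $(e_1, e_1)$ yields $\gamma(e_1) \, (1 - \lambda(e_1) - \Lambda(e_1)) = 0$, which either forces $\gamma(e_1) = 0$ or leaves it free. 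Thus ${\mathcal A} {\mathcal F} \, (P)$ splits into four disjoint branches indexed by $(\lambda(e_1), \Lambda(e_1)) \in \{0,1\}^2$. For each branch I would solve the remaining linear relations (AF2), (AF4), (AF5) to identify the free entries of $(\theta, f)$, and then use the equivalence $\equiv_1$ --- which, since $\gamma$ is fixed within a branch, acts through only the two scalars $r(e_1)$ and $r(e_2)$ --- to normalize them.

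The key observation is that in the branch $(\lambda(e_1), \Lambda(e_1)) = (1, 0)$ with $\gamma(e_1) = 1$, the translation rule for $f(e_1, e_2)$ under $\equiv_1$ reduces to $r(e_2)\,(\gamma(e_1) - 1) = 0$, so $f(e_1, e_2)$ survives as an additional invariant $\nu \in k$ and produces the family ${}_{\nu}k_{1}^{3}$. In the same branch but with $\gamma(e_1) \in k - \{1\}$ only $\gamma(e_1)$ survives, giving ${}_{\omega}\widetilde{k}_{1}^{3}$. The symmetric branch $(\lambda(e_1), \Lambda(e_1)) = (0, 1)$ yields ${}_{\tau}\overline{k}_{1}^{3}$ parameterized by $\tau = \gamma(e_1) \in k$, while the two branches $(0,0)$ and $(1,1)$ each collapse to a single class --- all free entries of $(\theta, f)$ being trivializable by the two-dimensional gauge --- producing respectively ${}_{1}k_{1}^{3}$ and ${}_{1}\overline{k}_{1}^{3}$.

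For the non-abelian part ${\mathcal N} {\mathcal F} \, (P)$, axiom (NF1) explicitly expresses every value of $\theta$ in terms of $\lambda$ and $u \in k^*$, and (NF2) is then automatic (it reduces to an identity derived from the associativity of $P$, exactly as in the verification at the end of the proof of \prref{coflagdim1}). The equivalence $\equiv_2$ shifts $\lambda$ by $u \cdot r$, so $\lambda$ can be reset to zero, leaving $u \in k^*$ as the sole invariant and producing the family $k_{1, u}^{3}$. Assembling the four abelian branches together with this non-abelian family yields precisely $\{*\} \sqcup \{*\} \sqcup k \sqcup (k - \{1\}) \sqcup k \sqcup k^*$. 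The main obstacle will be the bookkeeping in the abelian case: for each branch one must solve a linear system in the entries of $(\theta, f)$ modulo a two-dimensional gauge action, and verify that each normalized representative translates, via \equref{extenddim1.1a}--\equref{extenddim1.1b}, into precisely one of the five abelian Poisson algebras listed in the statement.
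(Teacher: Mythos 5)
Your proposal is correct and follows essentially the same route as the paper: reduce via \thref{clsfP1} to computing $({\mathcal A}{\mathcal F}({}_{1}k_{1}^{2})/\equiv_1) \sqcup ({\mathcal N}{\mathcal F}({}_{1}k_{1}^{2})/\equiv_2)$, split the abelian data into the four branches indexed by $(\lambda(e_1),\Lambda(e_1))\in\{0,1\}^2$ (these are exactly the paper's four families), and normalize by the gauge $r$, with the decisive observation that $f(e_1,e_2)$ transforms by $r(e_2)(\gamma(e_1)-1)$ and hence survives precisely in the $(1,0)$-branch when $\gamma(e_1)=1$. Your treatment of the non-abelian part ($\theta$ determined by (NF1), $\lambda$ gauged away, $u$ the sole invariant) also matches the paper's argument verbatim in substance.
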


\begin{proof}
We denote by $\{y_{1}, y_{2}\}$ a $k$-basis of ${}_{1}k_{1}^{2}$.
The abelian co-flag data of ${}_{1}k_{1}^{2}$ are given as
follows:
\begin{eqnarray*}
&(\lambda_{1}, \, \Lambda_{1}, \, \theta_{1}, \, \gamma_{1},\,
f_{1}):& \quad \lambda_{1} = \Lambda_{1}= \gamma_{1} \equiv 0,
\,\, \theta_{1}(y_{1}, y_{1}) = \alpha, \,\, \theta_{1}(y_{1},
y_{2}) = \beta,\\
&& \quad f_{1}(y_{1}, y_{2}) = - f_{1}(y_{2},
y_{1}) = \beta, \, {\rm with}\,\, \alpha, \beta \in k\\
&(\lambda_{2}, \, \Lambda_{2}, \, \theta_{2}, \, \gamma_{2},\,
f_{2}):& \quad \lambda_{2}(y_{1}) = \Lambda_{2}(y_{1})= 1, \,
\gamma_{2} \equiv 0, \,\, \theta_{2}(y_{1}, y_{1}) = \zeta, \,\,
\theta_{2}(y_{2}, y_{1}) = \delta,\\
&& \quad f_{2}(y_{2}, y_{1}) = - f_{2}(y_{1}, y_{2}) = \delta, \,
{\rm with}\,\, \zeta, \delta \in k\\
&(\lambda_{3}, \,
\Lambda_{3}, \, \theta_{3}, \, \gamma_{3},\, f_{3}):& \quad
\lambda_{3}(y_{1}) = 1, \, \Lambda_{3} \equiv 0,
\, \theta_{3} \equiv 0, \,\, \gamma_{3}(y_{1}) = \omega,\\
&& \quad f_{3}(y_{1}, y_{2}) = - f_{3}(y_{2},
y_{1}) = \nu, \, {\rm with}\,\, \omega, \nu \in k\\
&(\lambda_{4}, \, \Lambda_{4}, \, \theta_{4}, \, \gamma_{4},\,
f_{4}):& \quad \lambda_{4} \equiv 0, \,  \Lambda_{4} (y_{1}) = 1,
\, \theta_{4} \equiv 0, \,\, \gamma_{4}(y_{1}) = \tau, \, f_{4}
\equiv 0 \, {\rm with}\,\, \tau \in k.
\end{eqnarray*}
To start with we should notice that in the light of \thref{clsfP1}
two equivalent abelian co-flag data need to have the same three
maps $\lambda$, $\Lambda$ and $\gamma$ and thus an abelian
co-\newpage flag datum $(\lambda_{i}, \, \Lambda_{i}, \,
\theta_{i}, \, \gamma_{i},\, f_{i})$ is never equivalent to
$(\lambda_{j}, \, \Lambda_{j}, \, \theta_{j}, \, \gamma_{j},\,
f_{j})$ if $i \neq j$. Using again \thref{clsfP1} we obtain that
an abelian co-flag datum $(\lambda_{1}, \, \Lambda_{1}, \,
\theta_{1}, \, \gamma_{1},\, f_{1})$ implemented by $(\alpha,\,
\beta)$ is equivalent to the abelian co-flag datum $(\lambda_{1},
\, \Lambda_{1}, \, \theta_{1}, \, \gamma_{1},\, f_{1})$
implemented by $(0, \, 0)$. The latter gives rise to the Poisson
algebra ${}_{1}k_{1}^{3}$. A co-flag datum $(\lambda_{2}, \,
\Lambda_{2}, \, \theta_{2}, \, \gamma_{2},\, f_{2})$ implemented
by $(\zeta,\, \delta)$ is equivalent to the abelian co-flag datum
implemented by $(0, \, 0)$. This gives rise to the Poisson
algebras denoted by ${}_{1}\overline{k}_{1}^{3}$. Consider now two
co-flag data $(\lambda_{3}, \, \Lambda_{3}, \, \theta_{3}, \,
\gamma_{3},\, f_{3})$ implemented by $(\omega,\, \nu)$ and
respectively $(\omega',\, \nu')$. In order for the two co-flag
data to be equivalent we need to have $\omega = \omega'$. If
$\omega = \omega ' = 1$ then the co-flag data are equivalent if
and only if $\nu = \nu '$; we denote the corresponding Poisson
algebra by ${}_{\nu}k_{1}^{3}$. On the other hand, if $\omega =
\omega ' \neq 1$ then the two co-flag data are always equivalent
and therefore any such co-flag datum is equivalent to the co-flag
datum implemented by $(\omega, \, 0)$; the corresponding Poisson
algebra will be denoted by by ${}_{\omega}\widetilde{k_{1}}^{3}$.
Finally two co-flag data $(\lambda_{4}, \, \Lambda_{4}, \,
\theta_{4}, \, \gamma_{4},\, f_{4})$ corresponding to $\tau$ and
respectively $\tau '$ are equivalent if and only if $\tau = \tau
'$. The corresponding Poisson algebras are denoted by
${}_{\tau}\overline{k}_{1}^{3}$.

On the other hand, the non-abelian co-flag data of
${}_{1}k_{1}^{2}$ are given as follows:
\begin{eqnarray*}
&(\lambda, \theta, u):& \quad \lambda(y_{i}) = \alpha_{i} \in k,
\,\, i \in \{1, 2\}, \,\, u \in k^{*},\, \,
\begin{tabular}{c|cc}
  $\theta$ & $y_1$ & $y_2$  \\\hline
  $y_1$   & $(\alpha_{1}^{2} - \alpha_{1})u^{-1}$ & $(\alpha_{1} \alpha_{2} - \alpha_{2}) u^{-1}$   \\
  $y_2$   & $\alpha_{1} \alpha_{2} u^{-1}$ &  $\alpha_{2}^{2}u^{-1}$     \\
\end{tabular}
\end{eqnarray*}
A straightforward computation based on \thref{clsfP1} proves that
the non-abelian co-flag datum $(\lambda, \theta, u)$ implemented
by $u \in k^{*}$, $\alpha_{1}$, $\alpha_{2}$ is equivalent to the
non-abelian co-flag datum $(\lambda, \theta, u)$ implemented by $u
\in k^{*}$, $0$, $0$ which gives rise to the Poisson algebra
denoted by $k_{1, u}^{3}$. The proof is now finished.
\end{proof}

\end{document}